\tikzset{node distance=2cm, auto}
\DeclareMathOperator{\Sq}{Sq}
\DeclareMathOperator{\Inj}{Inj}
\DeclareMathOperator{\img}{im}
\DeclareMathOperator{\Endo}{End}
\DeclareMathOperator{\Ind}{Ind}
\DeclareMathOperator{\Res}{Res}
\DeclareMathOperator{\Fgt}{Forget}
\DeclareMathOperator{\Sym}{Sym}
\DeclareMathOperator{\End}{End}
\DeclareMathOperator{\Univ}{U}
\DeclareMathOperator{\K}{K}
\DeclareMathOperator{\Hom}{Hom}
\DeclareMathOperator{\Ext}{Ext}
\DeclareMathOperator{\NH}{NH}
\newcommand{\uli}[1]{\underline{\smash{\normaltext{#1}}}}
\newcommand{\conj}[1]{\quad\textnormal{ #1 }\quad}
\newcommand{\inp}[1]{\ensuremath{\langle #1 \rangle}}
\newcommand{\inpr}[1]{\left( \ensuremath{ #1 } \right)}
\newcommand{\module}[0]{\operatorname{-mod}}
\newcommand{\gmodule}[0]{\operatorname{-gmod}}
\newcommand{\gfmodule}[0]{\operatorname{-gfmod}}
\newcommand{\stabmodule}[0]{\operatorname{-\uli{mod}}}
\newcommand{\stabFgt}[0]{\operatorname{\uli{Forget}}}
\newcommand{\stabgmodule}[0]{\operatorname{-\uli{gmod}}}
\newcommand{\stabgfmodule}[0]{\operatorname{-\uli{gfmod}}}
\newcommand{\antip}[0]{S}
\newcommand{\normaltext}[1]{\textnormal{#1}}
\newcommand{\quantsl}[0]{\Univ_q\mathfrak{sl}(2)}
\newcommand{\cpquantsl}[0]{\aU^+}
\newcommand{\ZZ}{\mathbb{Z}}
\newcommand{\FF}{\mathbb{F}}
\newcommand{\CC}{\mathbb{C}}
\newcommand{\aA}{\mathcal{A}}
\newcommand{\aD}{\mathcal{D}}
\newcommand{\aM}{\mathcal{M}}
\newcommand{\aK}{\mathcal{K}}
\newcommand{\aP}{\mathcal{P}}
\newcommand{\aU}{\mathcal{U}}
\newcommand{\aQ}{\mathcal{Q}}
\newcommand{\mar}[0]{\aM}
\newcommand{\sap}[0]{\ensuremath{\aA_p}}
\newcommand{\ssap}[1]{\ensuremath{\aA_p(#1)}}
\newcommand{\dsap}[0]{\ensuremath{\aA^p}}
\newcommand{\dssap}[1]{\ensuremath{\aA^p(#1)}}
\def\imod#1{\allowbreak\mkern2.5mu({\operator@font mod}\,#1)}
\renewcommand{\a}{\alpha}
\newcommand{\s}{\sigma}
\renewcommand{\d}{\delta}
\DeclareMathOperator{\Id}{Id}
\DeclareMathOperator{\Fl}{Fl}
\theoremstyle{plain}
\newtheorem{theorem}[subsection]{Theorem}
\newtheorem*{theoremun}{Theorem}
\newtheorem*{conjecture}{Conjecture}
\newtheorem{proposition}[subsection]{Proposition}
\newtheorem{corollary}[subsection]{Corollary}
\newtheorem{lemma}[subsection]{Lemma}
\theoremstyle{remark}
\newtheorem*{remark}{Remark}
\theoremstyle{definition}
\newtheorem{example}{Example}
\newtheorem{definition}[subsection]{Definition}
\newtheorem*{notation}{Notation}
\numberwithin{equation}{section}
\begin{document}
\title[Steenrod Structures on Quantum Groups]{Steenrod Structures on Categorified Quantum Groups}

\author[Beliakova and Cooper]{Anna Beliakova and Benjamin Cooper}
\address{Institut f\"{u}r Mathematik, Universit\"{a}t Z\"{u}rich, Winterthurerstrasse 190, CH-8057 Z\"{u}rich}
\email{anna\char 64 math.uzh.ch}
\email{benjamin.cooper\char 64 math.uzh.ch}

\begin{abstract}
Categorified quantum groups play an increasing role in quantum topology and
representation theory. The Steenrod algebra is a fundamental component of
algebraic topology. In this paper we show that categorified quantum groups
can be extended to module categories over the Steenrod algebra in a natural
way. This yields an intepretation of the small quantum group by Khovanov and
Qi.
\end{abstract}

\maketitle

\section{Introduction}\label{introsec}

Since their introduction quantum groups have played an important
role in quantum topology and representation theory. The language of
categorification has emerged as a mechanism for increasing the power of the
topological invariants associated to these objects. These new invariants
replace polynomials with homology groups which contain torsions and
symmetries that are not detectible the older setting.

In algebraic topology the Steenrod algebra $\sap$ is a powerful tool for
understanding $p$-torsion information. Each space $X$ determines a module
$H^*(X;\FF_p)$ over the Steenrod algebra and the Adam's spectral sequence
establishes a relationship between $\Ext$ groups of the module
$H^*(X;\FF_p)$ and the homotopy groups of $X$. As such the homological
algebra of modules over the Steenrod algebra has a long history, see
\cite{Steenrod, Margolis, Palmieri, Schwartz}.

The main goal of this paper is to introduce and study a Steenrod algebra
structure in the context of the categorifed quantum groups, see
\cite{KhLQG1, Rouquier}.  Our focus is on the categorification $\aU^+$ of
the positive half $U^+$ of $\quantsl$ and the 2-categories $\aU^+_{\FF_p}$
obtained from $\aU^+$ by taking coefficients in the finite fields $\FF_p$.

\begin{theoremun}
When coefficients are taken in the finite field $\FF_p$, the
categorification $\aU^+$ of the positive half $U^+$ of the quantum group
$\mathfrak{sl}(2)$ can be extended to a 2-category enriched in modules over
the Steenrod algebra.
\end{theoremun}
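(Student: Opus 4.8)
The plan is to realise every $2$-morphism space of $\aU^+_{\FF_p}$ as the $\FF_p$-cohomology of a space, transport the tautological action of $\sap$ across this identification, and verify that the composition maps of the $2$-category are $\sap$-linear. Recall that the $2$-endomorphisms of the word $\aE^{\otimes n}$ form the nilHecke algebra $\NH_n$, which acts faithfully on $P_n=\FF_p[x_1,\dots,x_n]=H^*(BT_n;\FF_p)$ with $x_i$ acting by multiplication and the $i$-th crossing by the divided-difference operator $\partial_i(f)=(x_i-x_{i+1})^{-1}(f-s_i f)$; this realises $\NH_n\cong\End_{\Sym_n}(P_n)$, where $\Sym_n=P_n^{S_n}=H^*(BU(n);\FF_p)$.

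First I would equip $P_n$ and $\Sym_n$ with their standard $\sap$-module structures, determined on the degree-two generators by $\Sq^2 x_i=x_i^2$ when $p=2$ (respectively $P^1 x_i=x_i^p$ and $\beta x_i=0$ when $p$ is odd), with all other operations vanishing on the $x_i$, and extended by the Cartan formula. Since the Steenrod operations are natural they commute with the $S_n$-action permuting the variables, so $\Sym_n$ is an $\sap$-submodule and the inclusion $\Sym_n\hookrightarrow P_n$ is a map of $\sap$-modules.

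Next I would propagate the action to $\NH_n$. The decisive computation is the effect on a crossing: applying the multiplicative total Steenrod operation $\Sq$ (the total reduced power when $p$ is odd) to the identity $(x_i-x_{i+1})\partial_i f=f-s_i f$ and using $\Sq(x_i-x_{i+1})=(x_i-x_{i+1})\bigl(1+(x_i-x_{i+1})^{p-1}\bigr)$ gives
\begin{equation*}
\Sq\circ\partial_i=\bigl(1+(x_i-x_{i+1})^{p-1}\bigr)^{-1}\,\partial_i\circ\Sq
\end{equation*}
as operators on $P_n$. Because the coefficient is $s_i$-invariant it commutes with $\partial_i$, so its homogeneous pieces express each $\Sq^k(\partial_i)$ (respectively $P^k(\partial_i)$) as an explicit symmetric polynomial times $\partial_i$, an honest element of $\NH_n$; a parallel argument gives $\beta(\partial_i)=0$. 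The Cartan formula then makes $\NH_n$ into an $\sap$-module algebra, so vertical composition is $\sap$-linear, and one checks that the nilHecke relations $\partial_i^2=0$, the braid relations, and $\partial_i x_i-x_{i+1}\partial_i=1$ are preserved.

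Finally I would extend the action to the Hom-spaces between arbitrary $\aE$-words and their grading shifts, and verify that horizontal composition—realised by the juxtaposition maps $\NH_n\otimes\NH_m\to\NH_{n+m}$—and the identity $2$-morphisms are maps of $\sap$-modules; this is immediate from the Cartan formula, as these maps are assembled from the $\sap$-linear data on the $x_i$. I expect the main obstacle to be the third step: one must confirm that the power-series coefficient in the crossing formula contributes polynomial (indeed $s_i$-invariant) terms in each degree, so that every $\Sq^k(\partial_i)$ is defined without passing to a completion, and that the resulting assignment respects the Adem relations in $\sap$ simultaneously with the full set of nilHecke relations. The closed formula above is precisely what renders both verifications tractable.
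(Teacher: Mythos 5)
Your overall route coincides with the paper's: realize $\NH_n\cong\End_{\Sym_n}(\aP_n)$ (Definition \ref{endnhdef}), identify $\aP_n=H^*(BT;\FF_p)$ and $\Sym_n=H^*(BU(n);\FF_p)$ with their standard Steenrod structures so that the inclusion is $\sap$-linear, transport the action to $\NH_n$, and handle juxtaposition $\NH_n\otimes\NH_m\to\NH_{n+m}$ by the Cartan formula (this last step is Proposition \ref{indresdefprop}). Your displayed commutation relation is exactly the generating-function form of the paper's Proposition \ref{commutatorprop} --- note that $(x_i-x_{i+1})^{p-1}=s_i$ in characteristic $p$ --- and the action it induces, $P^k(\d_i)=(-1)^k s_i^k\d_i$, is the paper's Proposition \ref{actiononnhprop}.

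The genuine gap is the step you yourself flag as ``the main obstacle'' and then defer. You define the action on the generators $x_i,\d_i$ and extend by the Cartan formula; for this to be a well-defined $\sap$-module algebra structure one must check compatibility both with the full set of nilHecke relations and with the Adem relations, and asserting that the closed formula makes this ``tractable'' is not a proof: the Adem check against $P^k(\d_i)=(-1)^k s_i^k\d_i$ reduces, via formulas of the type $P^u(s_i^n)=(-1)^u\binom{n}{u}^p s_i^{u+n}$ (Lemma \ref{generalizedpslemma}), to a family of nontrivial mod-$p$ binomial identities that you never verify. The paper sidesteps this verification entirely: Proposition \ref{algstuffprop} shows that for a commutative or cocommutative Hopf algebra $H$ (so that $\antip=\antip^{-1}$), the space $\Hom_A(M,N)$ carries a canonical $H$-action, the adjoint action $(h\cdot f)(y)=h_{(2)}f(\antip(h_{(1)})y)$; applied with $A=\Sym_n$, $M=N=\aP_n$ this makes $\NH_n$ an $\sap$-module algebra with nothing to check against the Adem relations, because the structure is inherited functorially from the honest $\sap$-module $\aP_n$ (Corollary \ref{nilHeckeprop}). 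The explicit formulas are then \emph{computed} from that definition rather than \emph{used as} the definition. Your own identity $\Sq\circ\d_i\circ\Sq^{-1}=(1+s_i)^{-1}\d_i$ is precisely this adjoint action evaluated on the grouplike total operation, so the gap closes as soon as you reorganize: declare the action to be the adjoint one on $\End_{\FF_p}(\aP_n)$, which is automatically an $\sap$-module structure, and reinterpret your computation as the proof that this action preserves the subalgebra $\NH_n$ of $\Sym_n$-linear operators.
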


As a consequence, the category of modules over the Steenrod algebra acts on
the categorification
$$\sap\module \otimes\, \aU^+_{\FF_p} \to \aU^+_{\FF_p}.$$

Since this extension is obtained in a canonical way it is expected to apply
to many of the algebraic structures associated to categorified quantum
groups, such as knot homology theories.

This new structure is studied in Section \ref{expcompsec}. In particular, we
observe that the structure of a module category over the Steenrod algebra
gives rise to a natural family of differentials on $\aU^+_{\FF_p}$ called
Margolis differentials.

Khovanov and Qi have proposed a means by which categorified quantum groups
can be evaluated at a root of unity \cite{KhQi}. The authors prove that,
after passing to a finite field $\FF_p$, one can introduce a differential on
the 2-category $\aU^+_{\FF_p}$ which reduces it to a categorification of
$U^+$ in which the variable $q$ is set to a root of unity.  In Section
\ref{khqisec} we prove that the differential introduced in \cite{KhQi} is a
Margolis differential.

\begin{theoremun}
There exists a twisted Steenrod module structure on the nilHecke algebras
$\NH\otimes \FF_p$ which extends the $p$-differential graded structure
defining the categorification of the small quantum group.
\end{theoremun}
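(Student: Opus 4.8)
The plan is to exhibit the Khovanov--Qi $p$-differential as a (higher) Margolis differential for a regraded $\aA_p$-action on $\NH\otimes\FF_p$, so that the theorem becomes an application of the identification carried out in Section~\ref{khqisec} together with the enrichment supplied by the first theorem. I would begin from the faithful polynomial representation $\NH_n\hookrightarrow\End_{\FF_p}\bigl(\FF_p[x_1,\dots,x_n]\bigr)$, in which the generators act as multiplication by $x_i$ and as the divided difference operators; the Steenrod action produced abstractly by the first theorem is then made concrete on $\FF_p[x_1,\dots,x_n]$, with the total power operation acting diagonally through the Cartan formula. In this representation the Khovanov--Qi differential $\partial$ defining the small quantum group is the derivation determined on generators by $\partial(x_i)=x_i^{2}$, and one records that $\partial^{p}=0$.

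The core of the argument is to match $\partial$ with a single element of $\aA_p$ acting through this representation. When $p=2$ the relevant element is the lowest Milnor primitive $Q_0=\Sq^1$, which sends $x_i\mapsto x_i^{2}$ on the degree-one generators of $H^*\bigl(B\ZZ/2\bigr)^{\otimes n}$; here the Cartan formula is exactly the Leibniz rule for $\partial$, and $Q_0^{2}=0$ is the relation $\partial^{2}=0$. For general $p$ one instead matches $\partial$ with one of the $p$-nilpotent Margolis operations $P^t_s\in\aA_p$ identified in Section~\ref{khqisec}, whose square-zero analogue is the genuine $p$-nilpotency $(P^t_s)^{p}=0$ needed for the $p$-differential graded structure. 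In both cases the compatibility with multiplication is automatic from the coalgebra structure of $\aA_p$, and compatibility with the divided difference operators --- that the action descends through the braid and idempotent relations of $\NH_n$ --- follows because each $\partial_i$ is $\aA_p$-linear up to the grading shift recorded below.

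The word \emph{twisted} names precisely this shift. The Steenrod algebra carries its internal topological grading, while $\NH$ carries the quantum grading, and the two are related by a fixed linear rescaling; I would twist the $\aA_p$-module structure by the corresponding grading automorphism, so that the chosen operation lands in the degree prescribed by $\partial$ and so that the result is a bona fide module over $\aA_p$ rather than over a rescaled copy. Having fixed the twist, verifying that the nilHecke relations are respected and that the induced differential agrees with $\partial$ and satisfies $\partial^{p}=0$ is then a direct computation, and this is the sense in which the twisted module structure \emph{extends} the $p$-differential graded structure of Khovanov and Qi.

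The main obstacle I anticipate is the bookkeeping of the twist at odd primes. One must pin down which Margolis operation $P^t_s$ realizes $\partial$ once the quantum and topological gradings have been identified, confirm that its nilpotency order is exactly $p$ rather than $2$, and check that the Koszul signs introduced by the odd-degree part of $\aA_p$ are absorbed consistently by the grading twist. Compatibility with the nilHecke relations under the twisted action is routine once the twist is fixed, but establishing that the twist is canonical --- independent of the polynomial presentation and natural in $n$ --- is where the argument requires care, and is exactly what allows the construction to propagate coherently across the whole $2$-category $\aU^+_{\FF_p}$.
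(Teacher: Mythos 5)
Your plan fails at odd primes, and the failure is structural rather than a matter of bookkeeping. You propose to realize the Khovanov--Qi differential $\partial(x_i)=x_i^2$ by choosing an element of $\sap$ (a Margolis operation $P^s_t$) acting through the \emph{standard} module structure of the first theorem, and then to absorb the degree mismatch by twisting with a grading automorphism. But in the standard structure on $\aP_n$ with $|x_i|=2$, every positive-degree element of $\sap$ raises cohomological degree by a multiple of $2(p-1)$, and the Margolis differentials act on polynomial generators by $p$th-power-type formulas (e.g.\ $P^0_1 x = x^p$, and more generally $x\mapsto x^{p^t}$), never by $x\mapsto x^2$. A grading twist rescales degrees; it does not change what an operator does to elements, so no twist can convert $x\mapsto x^{p^t}$ into $x\mapsto x^2$ when $p>2$. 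This is precisely why the paper does \emph{not} look for the differential inside the standard action: instead it builds a genuinely different $\sap$-module structure (Theorem \ref{khqidiffthm}), regrading $\sap$ by $|P^k|=2k$ and declaring
$$P^k x_i = {p - 1 \choose k}\, x_i^{k+1}, \qquad 0\leq k < p,$$
which is \emph{not} unstable, and which is legitimate because it is a regrading of the standard topological action on $H^*\bigl((BS^1_p)^{\times n};\FF_p\bigr)$, where the polynomial generator sits in degree $2(p-1)$ (Section \ref{ptorussec}); here $P^1 y_i = -y_i^2$ holds on the nose. The structure is then pushed to $\NH_n\otimes\FF_p$ via Corollary \ref{criteriacor}, which requires checking that $\Sym_n\hookrightarrow\aP_n$ is a map of $\sap$ module algebras (done with power sums), not the vague ``$\aA_p$-linearity up to grading shift'' of the divided difference operators that you invoke.

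Two smaller points. First, at $p=2$ your appeal to $Q_0=\Sq^1$ on degree-one classes is both unnecessary and outside the paper's framework: the paper's $\sat$ contains only the even operations ($P^n=\Sq^{2n}$, with $\Sq^{2n+1}$ and the Bockstein set to zero, so there is no odd-degree part and no Koszul signs to absorb), and at $p=2$ the standard structure already gives $P^1x_i=x_i^2$, so no modification is needed there at all. Second, the word ``twisted'' in the theorem is not (only) a grading rescaling: it also refers to the Khovanov--Qi twisting parameter $a$ on $\partial_a(\delta_i)$, which the paper realizes by replacing $\aP_n$ with the ideal $\aP_n(a)$, topologically the cohomology of the Thom space of a line bundle over $BT^n$ determined by the weight vector $(0,a,2a,\ldots,(n-1)a)$ (Section \ref{twstrsec}); your proposal does not address this component of the statement.
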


This result can be viewed as one explanation for the definitions appearing
in \cite{KhQi}. 

This paper is organized in the following way. In Section \ref{steenrodsec}
basic properties of the Steenrod algebra $\sap$ are reviewed. Section
\ref{pdgstr} contains a discussion of the relationship between
$p$-differential graded structures and Steenrod module algebra
structures. Section \ref{nilHeckesec} contains an introduction to the
nilHecke algebras which are used to define the categorification of $U^+$. In
Section \ref{modulestrsec} a standard Steenrod structure on the nilHecke
algebras is constructed and explored. Section \ref{qmodulestrsec} summarizes
the construction in \cite{KhHopfological, QiHopfological} and discusses some
of the consequences of the existence of Steenrod structures from this
perspective. In Section \ref{khqisec} we establish a relationship between
the Steenrod algebra and the small quantum groups of Khovanov and
Qi. Section \ref{proofsec} contains a number of proofs.

\section{Steenrod algebras}\label{steenrodsec}

In this section we define the Steenrod algebra $\sap$ and recall some of its
basic properties.  Full details can be found in the references, see
\cite{Steenrod, Margolis, Palmieri}.

Note that the algebras $\sap$ defined below are not the full Steenrod
algebras.  When $p$ is odd, we do not include the Bockstein $\beta$ and,
when $p$ is even, we set $P^n = \Sq^{2n}$. In this paper, $\beta$ and
$\Sq^{2n+1}$ will always be zero unless otherwise noted.  This is because
modules will only have graded elements of even degree.

A stable cohomology operation is a natural transformation of the cohomology
functor $H^*(-;R)$ which commutes with the suspension isomorphisms:
$$H^*(\Sigma X;R) \cong H^{*+1}(X;R).$$

When $R$ is the finite field $\FF_p$ of order $p$, stable cohomology
operations are called \emph{mod $p$ Steenrod operations}. There are 
basic operations,
$$P^n : H^*(X;\FF_p) \to H^{*+2n(p-1)}(X;\FF_p)\conj{where} n\geq 0,$$

called reduced $p$th powers. The axioms below suffice to characterize the
stable operations on cohomology rings $H^*(X;\FF_p)$.

\begin{definition}{(Steenrod axioms)}\label{steenrodaxioms}
\begin{enumerate}
\item $P^0 = \Id$.
\item If $|x| = 2n$ then $P^n x = x^p.$
\item If $2n > |x|$ then $P^n x = 0.$
\item The Cartan formula:
\begin{equation*}\label{cartanrels}
P^n(xy) = \sum_{i+j = n} P^i x \cdot P^j y.
\end{equation*}
\item The Adem relations: if $a,b>0$ and $a < p b$ then
\begin{equation*}\label{ademrels}
P^a P^b = \sum_{j = 0}^{[a/p]} (-1)^{a+j} {(p-1)(b-j) - 1 \choose a - pj} P^{a+b-j} P^j,
\end{equation*}
where the binomial coefficient is interpreted modulo $p$.
\end{enumerate}
\end{definition}

The example below follows from the axioms in Definition \ref{steenrodaxioms}
above.

\begin{example}
If $y\in H^2(X;\FF_p)$ is a cohomology class of degree 2 then 
\begin{equation}\label{powerofgen}
P^k y^n = {n \choose k} y^{n + k(p-1)} \conj{if} k \leq n
\end{equation}
and $P^k y^n = 0$ otherwise. 
\end{example}

The algebra $\sap$ is formed by grouping together all of the reduced $p$th
power operators.

\begin{definition}
The Steenrod algebra $\sap$ is the free $\FF_p$ algebra on the generators
$P^n$, $n \geq 0$, subject to the Adem relations $(5)$ above.
\end{definition}

By construction the algebra $\sap$ acts on the cohomology groups
$H^*(X;\FF_p)$ of any topological space $X$. However, it is important to
note that not every $\sap$ module comes from the cohomology of a space, see
Section \ref{unstabmsec}.

A grading on the algebra $\sap$ is determined by setting $|P^k| =
2k(p-1)$. There is a cocommutative coproduct, $\Delta : \sap \to
\sap\otimes\sap$, defined by,
$$\Delta(P^n) = \sum_{i+j = n} P^i \otimes P^j.$$

This choice is determined by the Cartan formula in Definition
\ref{steenrodaxioms} and makes $\sap$ into a Hopf algebra.  If $\Delta(x) =
\sum x_{(1)} \otimes x_{(2)}$ then the antipode $\antip : \sap\to\sap$ is
determined recursively by the relations
$$\antip(1) = 1\conj{and} \sum \antip(x_{(1)}) x_{(2)} = 0.$$

The dual Steenrod algebra $\dsap$ is the graded dual of the Steenrod algebra
$\sap$.  In degree $n$, $\dsap$ consists of $\FF_p$-valued functions on
degree $n$ elements of $\sap$. Since $\sap$ is a cocommutative Hopf algebra,
the dual algebra $\dsap$ is a commutative Hopf algebra. In
\cite{MilnorSteenrod}, Milnor proved that $\dsap$ is a polynomial ring,
$$\dsap \cong \FF_p[\xi_0, \xi_1, \xi_2, \ldots].$$

The first generator is $\xi_0 = 1$ and the $n$th generator $\xi_n$ is dual
to the element $P^{p^{n-1}}P^{p^{n-2}}\cdots P^p P^1\in\sap$. A grading on
$\dsap$ is determined by setting $|\xi_n| = 2(p^n - 1)$. The coproduct $\Delta :
\dsap\to\dsap\otimes\dsap$ is given by,
$$\Delta(\xi_n) = \sum_{i+j = n} \xi_i^{p^j}\otimes \xi_j.$$

\subsection{Margolis Differentials}\label{margolishomsec}
There is a family of special elements $d_t$ in the Steenrod algebra $\sap$
called Margolis differentials.

\begin{definition}\label{margolisdef} 
The $(s,t)$th Margolis differential $P^s_t$ is the element of $\sap$ which is dual to $\xi_t^{p^s}\in\dsap$.
\end{definition}

When $s<t$, one can show that $(P^s_t)^p = 0$ \cite{Margolis}. Primitive
Margolis differentials determine $p$-differentials on the cohomology rings
$H^*(X;\FF_p)$ of spaces $X$, see Section \ref{pdgstr}. These primitive
differentials $d_t = P^0_t$ can be defined recursively in terms of the
reduced power operations using the recurrence,
\begin{equation}\label{diffeqn}
d_1 = P^1\conj{ and } d_{i+1} = d_{i} P^{p^{i}} - P^{p^{i}} d_{i}.
\end{equation}

\begin{example}
The comodule structure on the polynomial ring $H^*(\CC P^{\infty};\FF_p) \cong
\FF_p[x]$, is given by the equation,
$$\varphi(x^{p^s}) = \sum_{k\geq 0} x^{p^{k+s}} \otimes \xi_k^{p^s},$$

which implies that Margolis differentials act according to the formula,
$$P^s_t (x^{p^i}) = \left\{\begin{array}{ll} x^{p^{k+s}} & i = s \\ 0 & i \ne s. \end{array} \right.$$
\end{example}

If $M$ is an $\sap$ module then the quotient
$\ker{(P^s_t)^{p-1}}/{\img(P^s_t)}$ is called \emph{Margolis homology}, see
\cite{AdamsMargolis}. This is akin to the slash homology considered in
\cite{KhQi}.

\subsection{Sub-Hopf Algebras}\label{subhopfsec}
In this section we recall the classification of sub-Hopf algebras of the
Steenrod algebra. This is the same as the classification of quotient Hopf
algebras of the dual Steenrod algebra $\dsap$.

\begin{theorem}{(\cite{AdamsMargolisSub})}\label{subhopfthm}
Every quotient Hopf algebra $B$ of the dual Steenrod algebra $\dsap$ is of the form,

\begin{equation}\label{quothopfeqn}
B = \dsap/(\xi_1^{p^{n_1}}, \xi_2^{p^{n_2}}, \xi_3^{p^{n_3}}, \ldots)
\end{equation}
where $\{n_i\}_{i=1}^{\infty}$ is a sequence of integers, $n_i \geq 0$, such
that for $0 < j < m$ either $n_m > n_{m-j} - j$ or $n_m \geq n_j$.
\end{theorem}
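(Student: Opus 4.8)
The plan is to write $B=\dsap/I$ and exploit that a quotient Hopf algebra of $\dsap$ is exactly the quotient by a Hopf ideal. Since $\dsap$ is connected, graded, and commutative over the field $\FF_p$, such an $I$ is a Hopf ideal precisely when it is simultaneously an ideal and a coideal, the compatibility with the antipode being automatic in this setting. So I would split the argument into two parts: first, check that an ideal of the displayed form is a coideal precisely under the stated numerical conditions, which is a direct computation; second, prove the structural converse, that \emph{every} Hopf ideal of $\dsap$ is generated by such truncating relations. I would do the computational half first.

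For that half, recall that $\Delta$ is an algebra homomorphism, so it is enough to evaluate it on the ideal generators $\xi_m^{p^{n_m}}$. Combining the coproduct formula $\Delta(\xi_m)=\sum_{i+j=m}\xi_i^{p^j}\otimes\xi_j$ with the Frobenius (all elements commute and we work in characteristic $p$) gives
\begin{equation*}
\Delta\bigl(\xi_m^{p^{n_m}}\bigr)=\sum_{i+j=m}\xi_i^{\,p^{\,j+n_m}}\otimes\xi_j^{\,p^{\,n_m}}.
\end{equation*}
Because $\xi_0=1$, the two extreme terms $i=0$ and $j=0$ already lie in $B\otimes I+I\otimes B$. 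For each interior index $0<j<m$, the monomial $\xi_{m-j}^{\,p^{\,j+n_m}}\otimes\xi_j^{\,p^{\,n_m}}$ lies in $B\otimes I+I\otimes B$ if and only if one of its two tensor factors lies in $I$, that is $\xi_{m-j}^{\,p^{\,j+n_m}}\in I$ or $\xi_j^{\,p^{\,n_m}}\in I$. Since $I$ is generated by the monomials $\xi_i^{p^{n_i}}$, membership is settled by comparing exponents of $p$, and this reproduces exactly the dichotomy recorded in the statement. Hence $I$ is a coideal, and $B$ a quotient Hopf algebra, precisely under those conditions; the degenerate case $n_i=\infty$ for all $i$ recovers $B=\dsap$ itself.

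The hard part will be the converse: ruling out any other Hopf ideals. Here $B$ is a connected graded commutative Hopf algebra over $\FF_p$ with all generators in even degree, so by Borel's structure theorem it is isomorphic as an algebra to a tensor product of monogenic factors $\FF_p[x]$ and $\FF_p[x]/(x^{p^s})$. The surjection $\pi\colon\dsap\to B$ induces a surjection on indecomposables $Q(\dsap)\twoheadrightarrow Q(B)$, and $Q(\dsap)$ is spanned by the classes of the $\xi_i$, one in each degree $2(p^i-1)$; since these degrees are distinct, matching degrees forces each monogenic generator of $B$ to be, modulo decomposables, the image of a single $\xi_i$, whose height $p^{n_i}$ defines the profile and shows $(\xi_i^{p^{n_i}})\subseteq I$. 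The main obstacle is promoting this to equality: one must control $I$ on the nose rather than merely its associated graded, since $\pi(\xi_i)$ need not be a clean Borel generator and the lower-degree generators can interfere. I would close the gap by a downward induction on degree, using the coproduct identity above to show that any extra relation would force one of the exponents to drop, so that no generators beyond the $\xi_i^{p^{n_i}}$ survive. This inductive disentangling is the crux, and it is the heart of the Adams--Margolis classification.
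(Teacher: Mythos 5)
First, a point of comparison: the paper does not prove Theorem \ref{subhopfthm} at all. It is quoted as background with a citation to \cite{AdamsMargolisSub}, so there is no internal argument to match yours against; you are attempting to reconstruct the Adams--Margolis classification itself. Your computational half is essentially sound as far as it goes: $\Delta$ is an algebra map, the Frobenius in characteristic $p$ gives $\Delta(\xi_m^{p^{n_m}})=\sum_{i+j=m}\xi_i^{p^{j+n_m}}\otimes\xi_j^{p^{n_m}}$, the interior terms have distinct bidegrees so no cancellation can occur, and a monomial tensor lies in the relevant subspace iff one of its factors lies in the monomial ideal $I$. Two corrections, though. The coideal condition is $\Delta(I)\subseteq I\otimes\dsap+\dsap\otimes I$, not ``$B\otimes I+I\otimes B$'' ($B$ is the quotient, not the ambient algebra). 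More substantively, your computation does \emph{not} ``reproduce exactly'' the dichotomy in the statement: it yields the non-strict inequality $n_m\geq n_{m-j}-j$, because in the boundary case $j+n_m=n_{m-j}$ the left factor $\xi_{m-j}^{p^{j+n_m}}=\xi_{m-j}^{p^{n_{m-j}}}$ is literally a generator of $I$, so the coideal condition holds even though $n_m>n_{m-j}-j$ fails. You should have flagged this discrepancy with the strict inequality printed in the theorem rather than asserting agreement; as written, the two conditions differ, and your own calculation is the evidence.

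The genuine gap is the converse, and you concede as much. Borel's structure theorem together with surjectivity on indecomposables $Q(\dsap)\twoheadrightarrow Q(B)$ buys you only this: for each $i$ there is a minimal $n_i$ (possibly infinite) with $\xi_i^{p^{n_i}}\in I$, hence an inclusion $(\xi_1^{p^{n_1}},\xi_2^{p^{n_2}},\ldots)\subseteq I$. What it does not address --- and what your ``downward induction on degree'' never actually engages --- is the reverse inclusion: why can the Hopf ideal $I$ not contain a mixed element, say $\xi_1^{a}+\lambda\,\xi_2^{b}$ in a matching degree, with neither monomial in $I$? Nothing in your sketch produces a contradiction from such an element; saying that ``any extra relation would force one of the exponents to drop'' names the desired conclusion, not a mechanism. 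The actual argument in \cite{AdamsMargolisSub} takes an element of $I$ of minimal degree lying outside the monomial ideal, applies the reduced coproduct, and uses minimality to force all terms of its diagonal into the monomial ideal, extracting a contradiction; this is a genuine multi-step analysis and it is the entire content of the classification. As it stands, your proposal establishes only the easy half of the theorem (which half, note, also corrects the inequality in the paper's transcription) and defers the half that makes the theorem a classification.
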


This theorem yields a large family of finite
dimensional sub-Hopf algebras $H$. For instance, $\sap$ is filtered:
\begin{equation}\label{filteq}
  \cdots \subset \ssap{n} \subset \ssap{n+1} \subset \cdots \subset \ssap{\infty} = \sap,
\end{equation}

where the algebras $\ssap{n}$ are dual to the algebras 
$$\dssap{n} = \dsap/(\xi_1^{p^n}, \xi_2^{p^{n-1}}, \xi_3^{p^{n-2}}, \ldots,
\xi_{n}^{p}, \xi_{n+1}, \xi_{n+2},\ldots).$$

The subalgebra $\ssap{n}$ is generated by $P^0 = 1$ and the first $n$
indecomposible Steenrod reduced $p$th powers, $\ssap{n} = \FF_p\inp{ P^{p^j} : 0\leq j < n}.$

If $d_t$ is a primitive Margolis differential from Section
\ref{margolishomsec} then
$$\mar_t = \FF_p\inp{d_t} \cong \FF_p[\partial]/(\partial^p)$$

is a sub-Hopf algebra. When $t = 1$, $d_1 = P^1$ and $\mar_1 = \ssap{1}$.

\subsection{$p$-DG structures}\label{pdgstr}

In this section we discuss the relationship between the $p$-differential
graded ($p$-DG) structures introduced in \cite{KhQi} and the Steenrod
algebra.

\begin{definition}{(\cite{KhQi} \S 2.2)}
Suppose that $k$ is a field of characteristic $p$ and $A$ is a $k$ algebra.
Then a $p$-DG structure on $A$ is a map $\partial : A \to A$ which satisfies the properties:
$$\partial^p = 0 \conj{ and } \partial(a\cdot b) = \partial(a)\cdot b + a \cdot \partial(b).$$
\end{definition}

Suppose that we choose the field $k$ to be $\FF_p$. Then a $p$-DG structure
on an algebra $A$ is the same as an $H$ module algebra structure on $A$ when
$H$ is the Hopf algebra $\FF_p[\partial]/(\partial^p)$.  We will refer to
any grading of $H$ as a $p$-DG structure.

In Sections \ref{margolishomsec} and \ref{subhopfsec} we saw that the
sub-Hopf algebra $\mar_t$ spanned by the Margolis differential $d_t$ is
isomorphic to $\FF_p[\partial]/(\partial^p)$. This suggests the following
two observations.

\begin{enumerate}
\item A Steenrod module structure on an algebra $A$ restricts to a natural
  family of $p$-differential graded structures given by the Margolis
  differentials described in Section \ref{margolishomsec}.

\item Conversely, identifying a given $p$-differential graded structure as
  the restriction of a Steenrod structure yields families of extensions
  along sub-Hopf algebras of the Steenrod algebra described in Section
  \ref{subhopfsec}.
\end{enumerate}

In Section \ref{modulestrsec} we show that the geometry underlying the
nilHecke algebras discussed in Section \ref{nilHeckesec} determines Steenrod
structures on these algebras in a natural way. In Section
\ref{qmodulestrsec} the homological algebra developed in the papers
\cite{KhHopfological, QiHopfological} is used to define analogues
$\aU^+_{\aA}$ of the categorified quantum group $\aU^+$ which have been
extended by this module structure.

An $\FF_2$ Steenrod structure on Khovanov homology was introduced by
Lipshitz and Sarkar, see \cite{LS}. The comparison above implies that these
results yield families of $p$-DG algebra structures on Khovanov homology.

In Section \ref{khqisec} the $p$-DG algebra structures on the nilHecke
algebras defined by Khovanov and Qi are interpreted homologically.

\section{nilHecke Algebras}\label{nilHeckesec}

In this section we recall the nilHecke algebras and some of their basic
properties.

\subsection{Algebraic Formulation}\label{algformsec}

\begin{definition}{(nilHecke algebra $\NH_n$)}
For each $n\geq 0$, the nilHecke algebra $\NH_n$ is the graded ring
generated by operators $x_i$ in degree $2$, $1\leq i \leq n$, and $\d_j$ in
degree $-2$, $1 \leq j < n$, subject to the relations:
\[
 \begin{array}{ll}
  \d_i^2 = 0,  & \d_i\d_{i+1}\d_i = \d_{i+1}\d_i\d_{i+1},\\
   x_i \d_i - \d_i x_{i+1} = 1, &  \d_i x_i - x_{i+1} \d_i = 1.
  \end{array}
\]

The operators also satisfy far commutativity relations,
\[
\begin{array}{ll}
\d_i x_j = x_j\d_i  \quad\text{if $|i-j|>1$}, &   \d_i\d_j = \d_j\d_i \quad \text{if $|i-j|>1$},\\   &  x_i x_j = x_j x_i \quad\text{for $1\leq i,j\leq n$}.
\end{array}
\]
\end{definition}

The generators $x_i$ are called polynomial generators and the generators
$\d_i$ are called divided difference operators.  There is a nice
diagrammatic presentation for these algebras in which a crossing is used to
depict $\d_i$ and a dot is used to denote $x_i$, see \cite{KhLQG1}.

The polynomial algebra $\aP_n$ on $n$ variables,
$$\aP_n = \FF_p[x_1,\ldots,x_n] \conj { where } |x_i| = 2,$$ 
serves as a defining representation for the nilHecke algebra $\NH_n$.
The symmetric group $\Sigma_n$ acts on $\aP_n$ by $\sigma(x_i) =
x_{\sigma(i)}$ for $\sigma \in \Sigma_n$. Let $\s_j$ denote the
transposition $(j,j+1)\in\Sigma_n$. The nilHecke algebra $\NH_n$ action on the polynomial algebra $\aP_n$ is defined on generators by letting $x_i$ act by multiplication and $\d_j$ act on $f\in\aP_n$ by the rule,
\begin{equation}\label{divopeq}
\d_j(f) = \frac{f-\s_j(f)}{x_j - x_{j+1}}.
\end{equation}

The divided difference operators act on products according to the formula
\begin{equation}\label{leibniz}
\d_i(fg) = \d_i(f) g + \s_i(f) \d_i(g).
\end{equation}

The ring of symmetric polynomials,
$$\Sym_n = \FF_p[x_1,\ldots,x_n]^{\Sigma_n} = \FF_p[e_1,\ldots,e_n] \conj{ where } |e_i| = 2i$$
is the ring of polynomials in $n$ variables which are invariant under the
action of the symmetric group. It is a polynomial algebra on the elementary
symmetric functions: $e_1,\ldots,e_n$. The subalgebra of polynomials invariant under the subgroup $\ZZ/2=\inp{\s_j} \subset \Sigma_n$ will be denoted by
$$\aP_n^{\s_j} = \{ f\in \aP_n : \s_j f = f \}\conj{ so that } \Sym_n\subset \aP^{\s_j}_n \subset \aP_n,$$

for each $n>0$ and for each $j=1,\ldots,n-1$. Equation \eqref{divopeq}
implies that $\d_i(e) = 0$ when $e\in \aP^{\s_j}_n$, $\d_i(f) \in
\aP^{\s_j}_n$ and $f\in \aP_n$ is any polynomial. Using \eqref{leibniz}
above this is equivalent to the $\Sym_n$-equivariance of each divided
difference operator:
$$\d_i(e f) = e \d_i (f) \conj{ where } e\in\Sym_n \normaltext{ and } f\in\aP_n.$$ 

The same is true for multiplication by $x_j$.  In fact, the nilHecke algebra
is the algebra of $\Sym_n$ linear operations on the ring $\aP_n$.
\begin{definition}{($\NH_n$)}\label{endnhdef}
$$\NH_n \cong \Endo_{\Sym_n}(\aP_n)$$
\end{definition}

See Section 2.5 \cite{KLMS} or  Proposition 3.5 \cite{Lauda}.

\subsection{Geometric Formulation}\label{geosec}

In this section we review one geometric interpretation for the algebraic
material in Section \ref{algformsec}. This is used to motivate the
construction in Section \ref{modulestrsec}. Standard references include
\cite{BGG, Demazure}, also the surveys \cite{Manivel, Hiller}.

A \emph{complete flag} is a sequence of nested spaces
$$ 0 =  V_0 \subset V_1 \subset \cdots \subset V_{n-1} \subset V_{n} = \CC^n \conj{ where } \dim_{\CC} V_i = i.$$

If $\Fl_n$ denotes the set of complete flags in $\CC^n$ then the stabilizer
of the induced $U(n)$ action on $\Fl_n$ is the $n$-torus $T \subset U(n)$
consisting of diagonal matrices. The identification,
$$\Fl_n \cong U(n)/T$$

endows $\Fl_n$ with the structure of a manifold. The cohomology
$H^*(\Fl_n;\FF_p)$ of each flag variety admits two descriptions. The Borel description, 
\begin{equation}\label{boreldesc}
H^*(\Fl_n;\FF_p)\cong \FF_p[x_1,\ldots, x_n]_{\Sigma_n} \cong \FF_p[x_1,\ldots,x_n]/\Sym_n^+
\end{equation}

where $\Sym_n^+$ consists of non-constant symmetric polynomials, is obtained
using Chern classes.  A different description can be given using a cellular
decomposition of the space of flags,
$$\Fl_n = \coprod_{w\in \Sigma_n} X_w.$$

If $l(w)$ is the length of the word $w\in\Sigma_n$ then duals $[X_w]\in H^{2
  l(w)}(\Fl_n;\FF_p)$ to the cycles determined by each cell form a basis for
the cohomology.

The nilHecke algebra arises when one attempts to relate these two
descriptions. Let $T_i$ be the subgroup of $U(n)$ associated to the Lie
algebra obtained by adjoining the $i$th root to the torus: if $\mathfrak{t}$ and $\mathfrak{t}_i$ denote the complexified Lie algebras of $T$ and $T_i$ respectively then
$$\mathfrak{t}_i = \mathfrak{g}_{\a_i} \oplus \mathfrak{t} \oplus \mathfrak{g}_{-\a_i}.$$
The bundles $\pi_i : U(n)/T \to U(n)/T_i$ determine the divided difference operators,
$$\d_i = {\pi_i}^* {\pi_i}_* : H^*(\Fl_n;\FF_p) \to H^{*-2}(\Fl_n;\FF_p)$$

which act by \eqref{divopeq} on the cohomology ring \eqref{boreldesc}.  The
relations satisfied by $\d_i$ imply that if $w\in S_n$ is expressed as a
reduced product of transpositions, $w = \s_{i_1} \s_{i_2} \cdots \s_{i_m},$
then operator $\delta_w = \d_{i_1} \d_{i_2} \cdots \d_{i_m}$ depends only on
the element $w\in \Sigma_n$. The theorem below uses the nilHecke algebra to
articulate the relationship between the two descriptions of
\eqref{boreldesc} given above.

\begin{theorem}\label{BGGD}
$$[X_w] = \delta_{w^{-1} w_0} x^\d$$ 
where $x^\d = x_1^{n-1} x_2^{n-2} \cdots x_{n-1}$ and $w_0\in\Sigma_n$ is the longest word in the symmetric group $\Sigma_n$. 
\end{theorem}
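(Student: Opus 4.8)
The plan is to prove the formula by downward induction on the length $l(w)$, starting from the top-degree class and stripping off divided difference operators one at a time. Everything rests on two ingredients: a normalization identifying the staircase monomial $x^\d$ with the class of the smallest cell, and a recursion describing the action of a single divided difference operator on Schubert classes. I would first isolate the geometric crux as the lemma that, for every $v\in\Sigma_n$ and every simple transposition $\s_i$,
\begin{equation*}
\d_i [X_v] = \begin{cases} [X_{v\s_i}] & \text{if } l(v\s_i) < l(v),\\ 0 & \text{if } l(v\s_i) > l(v). \end{cases}
\end{equation*}
Once this is available, both the normalization and the theorem follow by bookkeeping with reduced words.

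To establish the lemma I would work with the $\PP^1$-bundle $\pi_i\co U(n)/T \to U(n)/T_i$, whose fibres are the homogeneous spaces $T_i/T \cong \PP^1$, and analyze how the closure $\overline{X_v}$ of the Schubert cell sits relative to these fibres. The dichotomy is governed by whether $v\s_i$ is longer or shorter than $v$. When $l(v\s_i) > l(v)$ the variety $\overline{X_v}$ is saturated with respect to $\pi_i$ — it is a union of $\PP^1$-fibres — so fibre integration kills it, giving ${\pi_i}_*[X_v] = 0$ and hence $\d_i[X_v] = 0$. When $l(v\s_i) < l(v)$ the projection restricts to a birational (degree one) map of $\overline{X_v}$ onto its image, whose fibrewise saturation is exactly $\overline{X_{v\s_i}}$, so that $\pi_i^*{\pi_i}_*$ returns $[X_{v\s_i}]$. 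Feeding this incidence geometry into the projection formula defining $\d_i = \pi_i^*{\pi_i}_*$ yields the two cases. This step is the main obstacle: it requires the precise incidence geometry of Schubert varieties with the fibres of $\pi_i$ together with the correct handling of fibre integration, and it is where essentially all of the content of the theorem is concentrated.

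For the normalization $x^\d = [X_{w_0}]$, a degree count shows $x^\d$ lies in $H^{2N}(\Fl_n;\FF_p)$ with $N = \binom{n}{2} = l(w_0)$, a one-dimensional group spanned by the point class $[X_{w_0}]$; thus $x^\d = c\,[X_{w_0}]$ for some $c\in\FF_p$. Applying the longest composite operator $\delta_{w_0}$, which maps $H^{2N}$ onto $H^0 = \FF_p\inp{[X_e]}$, and iterating the lemma gives $\delta_{w_0}[X_{w_0}] = [X_e] = 1$, while the classical Jacobi--Demazure symmetrizer computation gives $\delta_{w_0}(x^\d) = 1$ directly. Comparing the two evaluations yields $1 = c\cdot 1$, so $c = 1$.

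Finally, to deduce the theorem I would fix a reduced expression $w^{-1}w_0 = \s_{i_1}\cdots\s_{i_r}$ with $r = l(w^{-1}w_0) = N - l(w)$, so that $\delta_{w^{-1}w_0} = \d_{i_1}\cdots\d_{i_r}$ (well defined by the discussion preceding the statement). Reading the composite from right to left and applying it to $[X_{w_0}] = x^\d$, the lemma produces at the $k$th stage the class $[X_{w_0\,\s_{i_r}\cdots\s_{i_{r-k+1}}}]$, provided the length drops by one at each stage. This proviso holds because a prefix of a reduced word is reduced, so $l(\s_{i_r}\cdots\s_{i_{r-k+1}}) = k$, and because $l(w_0 u) = N - l(u)$ then forces $l(w_0\,\s_{i_r}\cdots\s_{i_{r-k+1}}) = N - k$. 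After all $r$ operators the index is $w_0\,\s_{i_r}\cdots\s_{i_1} = w_0(w^{-1}w_0)^{-1} = w_0 w_0 w = w$, using $w_0^{-1} = w_0$, so the result is $[X_w]$, as claimed.
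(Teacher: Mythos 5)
Your overall skeleton is the classical one: the paper itself gives no proof of this theorem (it simply defers to \cite{BGG, Demazure}), and your three-step plan --- the recursion lemma for $\d_i$ on Schubert classes, the normalization $\delta_{w_0}x^\d = 1$ against the point class, and the downward induction along a reduced word for $w^{-1}w_0$ using $l(w_0 u) = l(w_0) - l(u)$ --- is exactly how those references proceed. The normalization step and the final induction, including the bookkeeping with reduced words, are correct as you state them.

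The genuine gap is in your geometric justification of the key lemma, which you yourself identify as carrying all the content: your saturation dichotomy is backwards. For the closure $\overline{X_v}$ of the cell $X_v$ (a variety of complex dimension $l(v)$) one has $\pi_i^{-1}\pi_i(X_v) = X_v \sqcup X_{v\s_i}$, so $\overline{X_v}$ is a union of $\PP^1$-fibres precisely when $l(v\s_i) < l(v)$ (then $X_{v\s_i} \subset \overline{X_v}$ and the saturation is $\overline{X_v}$ itself), whereas for $l(v\s_i) > l(v)$ the map $\pi_i|_{\overline{X_v}}$ is birational onto its image and the saturation is the strictly larger variety $\overline{X_{v\s_i}}$. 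Both of your claims assert the opposite; in particular, ``the fibrewise saturation of $\overline{X_v}$ is exactly $\overline{X_{v\s_i}}$'' is impossible when $l(v\s_i) < l(v)$, since then $\overline{X_{v\s_i}} \subsetneq \overline{X_v}$. The source of the mix-up is a conflation of two conventions: in the paper's grading $[X_w]$ lies in $H^{2l(w)}(\Fl_n;\FF_p)$, so it is the \emph{dual basis} class, Poincar\'{e} dual not to the cycle $[\overline{X_w}]$ (that dual would sit in degree $2(N - l(w))$, $N = \binom{n}{2}$) but to the fundamental class of a Schubert variety of \emph{codimension} $l(w)$, namely an opposite Schubert variety $\overline{X^w}$. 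It is for those codimension-$l(w)$ cycles that your dichotomy is correctly oriented: $\overline{X^v}$ is $\pi_i$-saturated exactly when $l(v\s_i) > l(v)$, giving $\d_i[X_v] = 0$, and is birational onto its image with saturation $\overline{X^{v\s_i}}$ when $l(v\s_i) < l(v)$, giving $\d_i[X_v] = [X_{v\s_i}]$. So the lemma you state is true and suffices for the rest of your argument, but the geometry offered in its support proves nothing as written; to repair it, either run the incidence argument on the opposite Schubert varieties representing the dual classes, or run it on fundamental classes --- obtaining $\d_i\,\mathrm{PD}[\overline{X_v}] = \mathrm{PD}[\overline{X_{v\s_i}}]$ when $l(v\s_i) > l(v)$ and $0$ otherwise --- and then dualize to recover the statement in the paper's normalization.
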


See \cite{BGG, Demazure}.

\subsubsection{Polynomial Rings}

The bundle $U(n) \to \Fl_n$ is classified by a map
$$f : \Fl_n \to BT.$$
The map $f^*$ which is induced by $f$ on cohomology is the quotient map from
the polynomial ring $\aP_n$ to its $\Sigma_n$-coinvariants,
$$\FF_p[x_1,\ldots, x_n] \to \FF_p[x_1,\ldots, x_n]_{\Sigma_n}.$$

The $T_i$ bundles $U(n)\to U(n)/T_i$ are classified by maps $\varphi_i : U(n)/T_i \to BT_i$ and there is a diagram
\begin{center}
\begin{tikzpicture}[scale=10, node distance=2.5cm]
\node (A1) {$\Fl_n$};
\node (B1) [right=1cm of A1] {$BT$};
\node (C1) [below=1cm of A1] {$U(n)/T_i$};
\node (D1) [below=1cm of B1] {$BT_i$};
\draw[->] (A1) to node {$$} (B1);
\draw[->] (A1) to node {$\pi_i$} (C1);
\draw[->] (B1) to node {$\varphi_i$} (D1);
\draw[->] (C1) to node {$$} (D1);
\end{tikzpicture}
\end{center}

from which it follows that the operators $\d_i = {\varphi_i}^*
{\varphi_i}_*$ determine the action of divided difference operators on the
polynomial ring $\aP_n$ discussed in Section \ref{algformsec}.

\section{Standard Steenrod Structures on nilHecke Algebras}\label{modulestrsec}

In this section we study an action of the Steenrod algebra $\sap$ on the
nilHecke algebras $\NH_n\otimes \FF_p$. The existence of this structure
derives from the geometric interpretation in Section \ref{geosec}. An
extension of the categorified quantum groups by this structure is found in
Section \ref{qmodulestrsec}. A non-standard Steenrod module structure on
$\NH_n\otimes \FF_p$ will be defined in Section \ref{khqisec}.

The map $i : T \to U(n)$ induces a map
\begin{equation}\label{inceq}
i^* : H^*(BU(n);\FF_p) \to H^*(BT;\FF_p)
\end{equation}

between cohomology rings of classifying spaces. This is the inclusion $i^* :
\Sym_n \to \aP_n$. Since they are cohomology rings, both rings in
\eqref{inceq} are $\sap$ module algebras and $i^*$ is a homomorphism of
$\sap$ module algebras. The map $i^*$ makes the polynomial algebra $\aP_n$
into a module over the algebra of symmetric polynomials $\Sym_n$.

The next proposition tells us that the collection of $\Sym_n$ equivariant
endomorphisms of $\aP_n$ is also a module over $\sap$.

\begin{proposition}\label{algstuffprop}
Suppose that $H$ is a commutative or cocommutative Hopf algebra. If $A$ is a
$H$ module algebra and $M,N$ are $H$ module left $A$ modules then the space
of maps $\Hom_A(M,N)$ is an $H$ module.
\end{proposition}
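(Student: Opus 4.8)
The plan is to first put an $H$-module structure on the whole space $\Hom_{\FF_p}(M,N)$ of $\FF_p$-linear maps and then verify that the $A$-linear maps form an $H$-stable subspace. Writing $\Delta(h)=\sum h_{(1)}\otimes h_{(2)}$ in Sweedler notation and letting $\antip$ denote the antipode, I would define the action by
\begin{equation*}
(h\cdot f)(m)=\sum h_{(2)}\cdot f\big(\antip(h_{(1)})\cdot m\big).
\end{equation*}
A short computation using $\Delta(hh')=\Delta(h)\Delta(h')$ and the anti-multiplicativity of $\antip$ shows this is a genuine left $H$-module structure on $\Hom_{\FF_p}(M,N)$; this step needs only that $H$ is a Hopf algebra and does not yet use the hypothesis.

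The heart of the argument is to check that if $f$ is $A$-linear then so is $h\cdot f$. I would expand $(h\cdot f)(a\cdot m)$ using, in turn, the module-algebra compatibility in the form $\antip(x)\cdot(a\cdot m)=\sum(\antip(x_{(2)})\cdot a)(\antip(x_{(1)})\cdot m)$ (which uses $\Delta\circ\antip=(\antip\otimes\antip)\circ\tau\circ\Delta$, $\tau$ the flip), then the $A$-linearity of $f$, and finally the $H$-equivariance of the $A$-action on $N$. Tracking indices via coassociativity, I expect to arrive at
\begin{equation*}
(h\cdot f)(a\cdot m)=\sum \big((h_{(3)}\antip(h_{(2)}))\cdot a\big)\,\big(h_{(4)}\cdot f(\antip(h_{(1)})\cdot m)\big),
\end{equation*}
where $\sum h_{(1)}\otimes h_{(2)}\otimes h_{(3)}\otimes h_{(4)}$ denotes the iterated coproduct and the associativity of the $H$-action on $A$ has been used in the form $h_{(3)}\cdot(\antip(h_{(2)})\cdot a)=(h_{(3)}\antip(h_{(2)}))\cdot a$.

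The goal is then to collapse the middle factor $h_{(3)}\antip(h_{(2)})$ to the counit. Grouping the two adjacent tensor slots and reading the relevant sub-sum as $\sum x_{(2)}\antip(x_{(1)})$, this is exactly where the hypothesis enters: the identity $\sum x_{(2)}\antip(x_{(1)})=\varepsilon(x)\,1$ fails in a general Hopf algebra, but it holds if $H$ is cocommutative (transpose the two coproduct factors to reduce to the antipode axiom $\sum x_{(1)}\antip(x_{(2)})=\varepsilon(x)1$) or if $H$ is commutative (transpose the two factors inside the product of $H$ to reduce to $\sum \antip(x_{(1)})x_{(2)}=\varepsilon(x)1$). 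Applying this, followed by the counit axiom, removes the $H$-action on $a$ altogether and yields $(h\cdot f)(a\cdot m)=a\cdot(h\cdot f)(m)$, so $\Hom_A(M,N)$ is an $H$-submodule of $\Hom_{\FF_p}(M,N)$.

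I expect the only real difficulty to be the bookkeeping in the iterated-coproduct expansion and the careful placement of the antipode, which reverses the coproduct and lands on the inner factor. The key design choice is to take $\antip$ on $h_{(1)}$ rather than on $h_{(2)}$: this is what makes the surviving antipode pair $h_{(3)}\antip(h_{(2)})$ adjacent, so that a single transposition --- permitted by either commutativity or cocommutativity --- suffices. The opposite convention would leave a non-adjacent pair of the form $h_{(1)}\antip(h_{(4)})$, which neither hypothesis alone can reduce, and it is precisely this asymmetry that explains why the statement requires $H$ to be commutative or cocommutative rather than an arbitrary Hopf algebra.
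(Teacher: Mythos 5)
Your proof is correct and follows essentially the same route as the paper's: the same action $(h\cdot f)(m)=\sum h_{(2)}\cdot f(\antip(h_{(1)})\cdot m)$, the same expansion via module-algebra compatibility, $A$-linearity of $f$, and equivariance of the action on $N$, arriving at the identical expression $\sum\bigl(h_{(3)}\antip(h_{(2)})\cdot a\bigr)\cdot\bigl(h_{(4)}\cdot f(\antip(h_{(1)})\cdot m)\bigr)$, followed by collapsing the adjacent antipode pair. If anything, your explicit observation that $\sum x_{(2)}\antip(x_{(1)})=\varepsilon(x)1$ fails in general and is exactly where commutativity or cocommutativity enters is more careful than the paper's terse appeal to the antipode axiom at that step.
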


See Section \ref{algstuffproof} for a proof. If the objects above are graded
then the statement above remains true. The corollary below follows by
combining Proposition \ref{algstuffprop} and Definition \ref{endnhdef}. This
is the starting point for Section \ref{qmodulestrsec}. A different proof of
this corollary is provided by Proposition \ref{actiononnhprop}.

\begin{corollary}\label{nilHeckeprop}
For each prime $p$, the nilHecke algebra $\NH_n\otimes \FF_p$ with
coefficients in the field $\FF_p$ is a graded $\sap$ module algebra.
\end{corollary}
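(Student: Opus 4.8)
The plan is to realize $\NH_n \otimes \FF_p$ as an endomorphism ring to which Proposition \ref{algstuffprop} applies, and then to upgrade the resulting module structure to a module algebra structure. Concretely, I would take the Hopf algebra $H$ to be $\sap$, which is cocommutative and so satisfies the hypothesis of Proposition \ref{algstuffprop}. For the module algebra $A$ I would take $A = \Sym_n = H^*(BU(n);\FF_p)$, and for the two modules I would take $M = N = \aP_n = H^*(BT;\FF_p)$. Because these are cohomology rings of spaces they are $\sap$-module algebras, and the map $i^* : \Sym_n \to \aP_n$ of \eqref{inceq} is a homomorphism of $\sap$-module algebras; in particular it equips $\aP_n$ with the structure of an $\sap$-module left $\Sym_n$-module. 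With these identifications in place, Proposition \ref{algstuffprop} shows that $\Hom_{\Sym_n}(\aP_n,\aP_n) = \Endo_{\Sym_n}(\aP_n)$ is an $\sap$-module, and Definition \ref{endnhdef} identifies this with $\NH_n \otimes \FF_p$.

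The genuine content beyond Proposition \ref{algstuffprop} is that $\NH_n \otimes \FF_p$ is a module \emph{algebra}, not merely a module: the $\sap$-action must satisfy the Cartan formula with respect to the product on $\NH_n$, which under the identification of Definition \ref{endnhdef} is composition of operators. I would recall that the natural $\sap$-action underlying Proposition \ref{algstuffprop} is the adjoint action $(h \cdot \phi)(m) = \sum h_{(1)}\, \phi(\antip(h_{(2)})\, m)$, and then verify directly that for $\phi,\psi \in \Endo_{\Sym_n}(\aP_n)$ one has $h \cdot (\phi \circ \psi) = \sum (h_{(1)}\cdot \phi)\circ(h_{(2)}\cdot \psi)$. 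Unwinding the left-hand side with a threefold coproduct and collapsing the two middle factors via the antipode relation $\sum \antip(h_{(1)}) h_{(2)} = \epsilon(h)\,1$ returns the right-hand side; specialized to $h = P^k$ with $\Delta(P^k) = \sum_{i+j=k} P^i \otimes P^j$ this is precisely $P^k(\phi\circ\psi) = \sum_{i+j=k} P^i(\phi)\circ P^j(\psi)$. Since $\Endo_{\Sym_n}(\aP_n)$ is a subalgebra of $\Endo_{\FF_p}(\aP_n)$ which Proposition \ref{algstuffprop} shows is closed under the $\sap$-action, the module algebra identity inherited from the ambient endomorphism ring restricts to it.

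Finally I would check compatibility with gradings. Under Definition \ref{endnhdef} the grading of $\NH_n$ with $|x_i| = 2$ and $|\d_i| = -2$ matches the grading of $\Endo_{\Sym_n}(\aP_n)$ by the degree of an operator, while the convention $|P^k| = 2k(p-1)$ together with the degree-preserving antipode makes $P^k$ act as a degree $2k(p-1)$ operator on $\Endo_{\Sym_n}(\aP_n)$; hence $\NH_n \otimes \FF_p$ is a \emph{graded} $\sap$-module algebra. I expect the main obstacle to be the module algebra verification of the previous paragraph, since Proposition \ref{algstuffprop} supplies only the underlying module structure; the remaining identifications are formal once the cohomological descriptions of $\Sym_n$ and $\aP_n$ are in hand.
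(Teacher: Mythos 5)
Your proposal is correct and follows essentially the same route as the paper: identify $\NH_n\otimes\FF_p$ with $\Endo_{\Sym_n}(\aP_n)$ via Definition \ref{endnhdef}, realize $\Sym_n \cong H^*(BU(n);\FF_p)$ and $\aP_n \cong H^*(BT;\FF_p)$ as $\sap$-module algebras related by the inclusion \eqref{inceq}, and apply Proposition \ref{algstuffprop} with $A=\Sym_n$, $M=N=\aP_n$. Your extra verification that the adjoint action satisfies $h\cdot(\phi\circ\psi) = \sum (h_{(1)}\cdot\phi)\circ(h_{(2)}\cdot\psi)$ is a welcome refinement rather than a deviation, since Proposition \ref{algstuffprop} as stated only supplies a module structure while the corollary asserts a module \emph{algebra} structure --- a point the paper's own proof leaves implicit.
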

\begin{proof}
Recall from Definition \ref{endnhdef} that the nilHecke algebra $\NH_n$ is
the subalgebra of the endomorphism algebra of the polynomial ring $\aP_n$
consisting of $\Sym_n$-equivariant endomorphisms. After identifying the
polynomial ring $\aP_n$ with the cohomology ring $H^*(BT;\FF_p)$ and the
ring of symmetric polynomials $\Sym_n$ with the cohomology ring
$H^*(BU(n);\FF_p)$ both become module algebras over the Steenrod
algebra. The inclusion \eqref{inceq} makes $\aP_n$ a module over $\Sym_n$
and the result follows from Proposition \ref{algstuffprop} above by setting,
$A=\Sym_n$, $M=\aP_n$ and $N=\aP_n$.
\end{proof}

\begin{corollary}\label{criteriacor}
Suppose that $H$ is a commutative or cocommutative Hopf algebra over $k$ and
there is a map 
$$H\otimes k[x] \to k[x]$$ 

which determines an $H$ module algebra structure on the polynomial rings
$\aP_n = k[x]^{\otimes n}$.  If the inclusion $\Sym_n \hookrightarrow \aP_n$
is a map of $H$ module algebras then there is an induced $H$ module algebra
structure on the nilHecke algebras $\NH_n\otimes k$.
\end{corollary}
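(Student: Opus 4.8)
The plan is to run the argument of Corollary \ref{nilHeckeprop} in the abstract, with the two hypotheses of the statement taking over the role played there by the geometric identifications with cohomology rings. The hypotheses already supply everything needed as input: the map $H\otimes k[x]\to k[x]$ makes $k[x]$ an $H$ module algebra, and via the coproduct of $H$ the $n$-fold tensor product $\aP_n = k[x]^{\otimes n}$ is then the promised $H$ module algebra (the coproduct being an algebra map is exactly what makes the tensor product of module algebras a module algebra). The second hypothesis, that $\Sym_n\hookrightarrow\aP_n$ is a map of $H$ module algebras, means that restricting the multiplication of $\aP_n$ along this inclusion presents $\aP_n$ as a left $\Sym_n$ module on which the $H$ action is compatible with the $\Sym_n$ action; that is, $\aP_n$ is an $H$ module left $\Sym_n$ module in the sense of Proposition \ref{algstuffprop}.

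With these structures in hand I would apply Proposition \ref{algstuffprop} with $A=\Sym_n$ and $M=N=\aP_n$. Since $H$ is commutative or cocommutative, the proposition endows $\Hom_{\Sym_n}(\aP_n,\aP_n)$ with an $H$ module structure, and by Definition \ref{endnhdef} this space is canonically $\NH_n\otimes k$, so the nilHecke algebra inherits an $H$ action. The remaining point, which goes beyond a bare citation of Proposition \ref{algstuffprop}, is that this action must be compatible with the algebra structure of $\Endo_{\Sym_n}(\aP_n)$, namely composition, so that we obtain a module \emph{algebra} and not merely a module. Writing the action of the proposition in Sweedler notation as $(h\cdot f)(m)=\sum h_{(1)}\,f(\antip(h_{(2)})\,m)$, I would verify the identity $h\cdot(f\circ g)=\sum (h_{(1)}\cdot f)\circ(h_{(2)}\cdot g)$ directly: expanding the right-hand side and invoking coassociativity yields a fourfold coproduct in which the two inner factors collapse through the antipode relation $\sum \antip(h_{(1)})h_{(2)}=\epsilon(h)1$, after which the counit axiom recovers the left-hand side. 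It is worth noting that this compatibility uses only the Hopf algebra axioms, so commutativity or cocommutativity is required solely in Proposition \ref{algstuffprop}, to guarantee that $h\cdot f$ remains $\Sym_n$ linear.

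I expect the genuine work to be bookkeeping rather than conceptual. The main thing to pin down is that the $\Sym_n$ module structure on $\aP_n$ produced from the abstract hypotheses is the same one underlying the isomorphism $\NH_n\cong\Endo_{\Sym_n}(\aP_n)$ of Definition \ref{endnhdef}, so that the $H$ action really lands on the nilHecke algebra and not on some formally larger endomorphism space. The Sweedler computation for composition compatibility must then be carried through carefully, and in the graded case one additionally checks that the $H$ action preserves the internal grading, which is immediate from the compatible gradings on $k[x]$ and $H$.
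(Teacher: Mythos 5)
Your proposal is correct and follows essentially the same route as the paper, which offers no separate proof but states that the corollary ``follows from the preceding discussion,'' namely the argument of Corollary \ref{nilHeckeprop}: apply Proposition \ref{algstuffprop} with $A=\Sym_n$, $M=N=\aP_n$ and invoke Definition \ref{endnhdef}, with your two hypotheses standing in for the topological identifications of $\aP_n$ and $\Sym_n$ with $H^*(BT;\FF_p)$ and $H^*(BU(n);\FF_p)$. Your additional Sweedler verification that the action $(h\cdot f)(m)=\sum h_{(1)}f(\antip(h_{(2)})m)$ is compatible with composition---so that $\Endo_{\Sym_n}(\aP_n)$ is a module \emph{algebra} and not merely a module---fills in a step the paper leaves implicit (Proposition \ref{algstuffprop} only asserts a module structure), but this is a detail of the same argument rather than a different approach.
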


The corollary above follows from the preceding discussion.  It will be used
in the proof of Theorem \ref{khqidiffthm}.

\subsection{Explicit Computations}\label{expcompsec}

The cohomology ring $H^*(BU(n);\FF_p)$ is isomorphic to the $\FF_p$
algebra of symmetric polynomials in $n$ variables.
$$H^*(BU(n);\FF_p) \cong \Sym_n$$

It follows that if $P\in\sap$ is an element of the Steenrod algebra and $e
\in\Sym_n$ is a symmetric polynomial then $P e$ can be expressed as a
function of elementary symmetric polynomials.  For instance, when
$e_j\in\Sym_n$ is an elementary symmetric polynomial and $p=2$ the formula
for $P^n e_j$ is called the Wu formula,
\begin{equation}\label{wuformeqn}
P^n e_i = \sum_k {i - n \choose k} e_{n-k} e_{i+k}
\end{equation}
see \cite{Wu}.  For other primes $p$, this formula is more complicated, see
\cite{Peterson, Shay, Lance}.

Since the elementary symmetric polynomials $e_i \in H^*(BU(n);\FF_p)$
represent Chern classes, the Wu formulas are universal relations satisfied
by these invariants of complex vector bundles.

\begin{notation}
The following polynomial will appear in many of our statements and computations.
\begin{equation}\label{sdefeqn}
s_i = \d_i(P^1 x_i) = \d_i(x_i^p) = \frac{x_i^p - x_{i+1}^p}{x_i - x_{i+1}} = \sum_{k+l = p-1} x_i^k x_{i+1}^l.
\end{equation}

We will consistently use the symbol $s_i$ to denote this polynomial.
\end{notation}

The next proposition relates the Steenrod operations $P^d$ and the divided
difference operators $\d_i$. This equation holds among operators on the
polynomial ring $\aP_n$.

\begin{proposition}\label{commutatorprop}
If $d$ is a positive integer, then
$$P^d\d_i - \d_i P^d  =\sum^{d}_{j=1} (-1)^{j}s^j_i \d_i P^{d-j}$$
where $s_i = \d_i(P^1 x_i)$.
\end{proposition}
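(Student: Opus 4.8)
The plan is to package the reduced power operations into the total power operation $P_t = \sum_{n \geq 0} P^n t^n$, acting on the polynomial ring $\aP_n[t]$ with a formal bookkeeping parameter $t$ adjoined, and to exploit that $P_t$ is multiplicative. By the Cartan formula in Definition \ref{steenrodaxioms}(4) one has $P_t(fg) = P_t(f)\,P_t(g)$, so $P_t$ is a ring homomorphism; moreover, since the Steenrod operations are natural with respect to the algebra automorphism permuting the generators, the operator $P_t$ commutes with the transposition $\s_i$. I would then record the action of $P_t$ on the relevant degree-$2$ elements: since $|x_i| = 2$, axioms (1)--(3) give $P^0 x_i = x_i$, $P^1 x_i = x_i^p$ and $P^n x_i = 0$ for $n \geq 2$, hence $P_t(x_i) = x_i + t x_i^p$. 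Using the defining identity \eqref{sdefeqn}, namely $x_i^p - x_{i+1}^p = (x_i - x_{i+1}) s_i$, this produces the key factorization
$$P_t(x_i - x_{i+1}) = (x_i - x_{i+1})(1 + t s_i).$$

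Next I would apply $P_t$ to the defining relation $(x_i - x_{i+1})\,\d_i(f) = (1 - \s_i)(f)$ of the divided difference operator \eqref{divopeq}. On the left, multiplicativity together with the factorization above gives $(x_i - x_{i+1})(1 + t s_i)\,P_t(\d_i f)$; on the right, commutation with $\s_i$ gives $(1 - \s_i)(P_t f) = (x_i - x_{i+1})\,\d_i(P_t f)$. Since $\aP_n$ is an integral domain I may cancel the common factor $(x_i - x_{i+1})$, arriving at the clean operator identity
$$\d_i\, P_t = (1 + t s_i)\, P_t\, \d_i.$$

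The final and decisive step is to invert the factor $(1 + t s_i)$. Working in $\aP_n[[t]]$, the operator of multiplication by $1 + t s_i$ has inverse $\sum_{j \geq 0} (-1)^j s_i^j\, t^j$, so the identity rearranges to $P_t \d_i = \big(\sum_{j\geq 0} (-1)^j s_i^j t^j\big)\,\d_i P_t$, whence
$$P_t\, \d_i - \d_i\, P_t = \Big(\sum_{j \geq 1} (-1)^j s_i^j\, t^j\Big)\, \d_i\, P_t.$$
Extracting the coefficient of $t^d$ on both sides, and using $P^m = 0$ for $m < 0$ to truncate the sum at $j = d$, yields precisely $P^d \d_i - \d_i P^d = \sum_{j=1}^d (-1)^j s_i^j\, \d_i\, P^{d-j}$, as claimed.

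I expect the main obstacle to be conceptual rather than computational. The naive application of $P_t$ produces the relation $\d_i P_t = (1 + t s_i) P_t \d_i$, in which the operators appear in the ordering $P\,\d_i$ on the ``wrong side''; it is only after inverting $(1 + t s_i)$ as a geometric series that one recovers the alternating sum with the ordering $\d_i\,P^{d-j}$ and the powers $s_i^j$ demanded by the statement. The two inputs requiring care are the multiplicativity of $P_t$ (the Cartan formula) and its commutation with $\s_i$ (naturality of the power operations under permutation of variables); granting these, the integral-domain cancellation and the extraction of the $t^d$-coefficient are routine.
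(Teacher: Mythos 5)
Your proof is correct, and it takes a genuinely different route from the paper's. The paper argues structurally: it uses the Cartan formula together with the Wu formula (which guarantees $P^n e \in \Sym_2$ for $e \in \Sym_2$) to show that $P^d\d_i - \sum_{j=0}^{d}(-1)^{j}s_i^j \d_i P^{d-j}$ is $\Sym_2$-linear, hence lies in $\NH_2 = \Endo_{\Sym_2}(\aP_2)$ by Definition \ref{endnhdef}; it then writes this element as $r + t\d_i$ with $r,t\in\aP_2$ and evaluates on $1$ and $x_1$ to conclude $r = t = 0$. You instead work directly with the definition \eqref{divopeq} of $\d_i$: applying the total operation $P_t$ to the relation $(x_i - x_{i+1})\d_i(f) = (1-\s_i)(f)$, using multiplicativity (Cartan), $\s_i$-equivariance, and the factorization $P_t(x_i - x_{i+1}) = (x_i - x_{i+1})(1 + t s_i)$, you obtain the closed-form operator identity $\d_i P_t = (1 + t s_i) P_t \d_i$ and then invert the geometric series. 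Your argument is more self-contained — it needs neither the Wu formula nor the identification $\NH_n \cong \Endo_{\Sym_n}(\aP_n)$, only the Steenrod axioms and naturality under permutation of variables (which you could pin down by noting that $\s_i P_t$ and $P_t \s_i$ are ring homomorphisms agreeing on generators) — and the intermediate identity $\d_i P_t = (1+ts_i)P_t\d_i$ immediately yields related formulas by coefficient extraction. It is also very much in the spirit of the paper's own toolkit: the proof of Proposition \ref{sformulaprop} uses exactly your operator at $t=1$, namely $\hat{P} = \sum_{k\geq 0}P^k$. What the paper's approach buys in exchange is a template (prove $\Sym_n$-linearity, then evaluate on a few test polynomials) that generalizes to situations where no clean multiplicative factorization is available.
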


For a proof, see Section \ref{commutatorproof}.

Proposition \ref{commutatorprop} allows us to establish a formula for
the action of Steenrod reduced $p$th power operations on the polynomials $s_i
= \delta_i(P^1 x_i)$.

\begin{proposition}\label{sformulaprop}
For each $d\geq 0$,
\begin{equation*}
P^d s_i = \left\{
\begin{array}{lr} (-1)^d s_i^{d+1} & d < p \\ 0 & d \geq p \end{array}\right.
\end{equation*}
where $s_i = \delta(P^1 x_i)$.
\end{proposition}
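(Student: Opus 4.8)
The plan is to split the statement at the threshold $d=p$ and treat the two regimes separately, since the vanishing for $d\geq p$ is essentially free. First observe that $s_i$ has degree $|s_i| = 2(p-1)$. Hence for any $d\geq p$ we have $2d > 2(p-1) = |s_i|$, so Steenrod axiom (3) of Definition \ref{steenrodaxioms} gives $P^d s_i = 0$ immediately. No computation with the nilHecke relations is needed in this range, so the entire content of the proposition lies in the regime $0\leq d < p$.

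For $d<p$ I would exploit the identity $s_i = \d_i(P^1 x_i) = \d_i(x_i^p)$ together with Proposition \ref{commutatorprop}. Applying the commutator formula to the class $x_i^p$ yields
\begin{equation*}
P^d s_i = P^d \d_i(x_i^p) = \d_i\bigl(P^d x_i^p\bigr) + \sum_{j=1}^{d} (-1)^j s_i^j\, \d_i\bigl(P^{d-j} x_i^p\bigr),
\end{equation*}
so the whole problem reduces to knowing $P^m(x_i^p)$. Since $x_i$ has degree $2$, the power formula \eqref{powerofgen} gives $P^m(x_i^p) = \binom{p}{m} x_i^{p + m(p-1)}$ for $m\leq p$ and $0$ otherwise; because $\binom{p}{m} \equiv 0 \imod{p}$ for $0 < m < p$, the class $P^m(x_i^p)$ vanishes for every $m$ with $0 < m < p$, equalling $x_i^p$ when $m=0$.

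Feeding this back in, for $1\leq d\leq p-1$ the leading term $\d_i(P^d x_i^p)$ vanishes, and in the sum the factor $P^{d-j}(x_i^p)$ is nonzero only when $d-j = 0$, i.e. $j=d$ (the other possibility $d-j = p$ cannot occur since $d<p$). Thus only the top term survives, giving $P^d s_i = (-1)^d s_i^{d}\,\d_i(x_i^p) = (-1)^d s_i^{d+1}$; the case $d=0$ is the trivial identity $P^0 s_i = s_i$. This establishes the claimed formula across $0\leq d < p$.

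The bookkeeping with the binomial coefficients modulo $p$ is the only delicate point, and it is precisely what makes the sum in Proposition \ref{commutatorprop} collapse to a single term. I would also record a shortcut that bypasses the commutator entirely: in characteristic $p$ the Frobenius identity gives $s_i = \tfrac{x_i^p - x_{i+1}^p}{x_i - x_{i+1}} = (x_i - x_{i+1})^{p-1}$, so applying \eqref{powerofgen} to the degree-$2$ class $z = x_i - x_{i+1}$ directly yields $P^d(z^{p-1}) = \binom{p-1}{d} z^{(p-1)(d+1)} = (-1)^d s_i^{d+1}$ for $d<p$, using $\binom{p-1}{d} \equiv (-1)^d \imod{p}$, and $0$ for $d\geq p$ again by axiom (3). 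This alternative renders the main obstacle purely arithmetic.
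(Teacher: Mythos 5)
Your proof is correct, and it takes a genuinely different route from the paper's. The paper also rests on Proposition \ref{commutatorprop}, but rather than fixing $d$ it assembles the total operation $\hat{P}=\sum_{k\geq 0}P^k$, resums the commutator identity as a geometric series to obtain $\hat{P}s = \d(\hat{P}x^p)/(1+s)$, evaluates $\hat{P}(x^p)=(\hat{P}x)^p=x^p+x^{p^2}$ by the Cartan formula, and then reads both regimes off the closed form $\hat{P}s = s(1+s^p)/(1+s)=\sum_{k=0}^{p-1}(-1)^k s^{k+1}$ by matching graded components. Your term-by-term argument replaces that resummation with the observation that $\binom{p}{m}\equiv 0\imod{p}$ for $0<m<p$ collapses the commutator sum to the single term $j=d$, and you dispose of $d\geq p$ separately via the instability axiom (3) of Definition \ref{steenrodaxioms}; that appeal is legitimate here because the proposition concerns the standard structure, where the action on $\aP_n\cong H^*(BT;\FF_p)$ comes from the cohomology of a space --- indeed \eqref{powerofgen}, which the paper invokes freely, already encodes instability. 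The Frobenius shortcut is the best part of your write-up: since $s_i=(x_i-x_{i+1})^{p-1}$ in characteristic $p$, a single application of \eqref{powerofgen} to the degree-$2$ class $x_i-x_{i+1}$, together with $\binom{p-1}{d}\equiv(-1)^d\imod{p}$, proves the whole proposition without Proposition \ref{commutatorprop} at all. What the paper's formulation buys in exchange is the generating-function identity for $\hat{P}s$, which is exactly the form reused in the proof of Lemma \ref{generalizedpslemma} to compute $P^k s^n$ for all $n$; note, though, that your shortcut generalizes just as readily, since $s_i^n=(x_i-x_{i+1})^{n(p-1)}$ is again a power of a degree-$2$ class.
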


For a proof of this proposition, see Section \ref{sformulaproof}.

\subsection{The nilHecke algebra $\NH_n$ as a module}\label{expmodformsec}

The action of the Steenrod algebra on polynomials $\aP_n$ is described by
Equation \eqref{powerofgen}. Since $\sap$ is a Hopf algebra, there is an
induced action on $\End(\aP_n)$. If we write the comultiplication and
antipode as
$$\Delta(P^n)= \sum P^n_{(1)}\otimes P^n_{(2)}=\sum^n_{i=0} P^i\otimes P^{n-i}\conj{and} \antip (P^d)=-P^d-\sum^{d-1}_{i=1} P^i \antip (P^{d-i})$$

repectively, then $P^n\in\sap$ acts on $f\in \End(\aP_n)$ by $f \mapsto \overline{P}^n f$ where
\begin{equation*}
(\overline{P}^n f)(y) = P^n_{(2)}f(\antip (P^n_{(1)})(y))\, 
\end{equation*}
for any polynomial $y\in\aP_n$. 

This formula agrees with the standard one in which $\antip$ is replaced by
$\antip^{-1}$. If $H$ is a commutative or cocommutative Hopf algebra then
the antipode $\antip$ satisfies $\antip = \antip^{-1}$, see Proposition 8.8
\cite{MilnorMoore}.

\begin{proposition}\label{actiononnhprop}
Let $P^n \in \sap$ be the $n$th Steenrod reduced power operation and $\d_i\in\NH_m$ a divided difference operator in the nilHecke algebra. Then 
$$\overline{P}^n \d_i = (-1)^n s_i^n \delta_i \conj{and} \overline{P}^n x_i = P^n x_i, $$
where $s_i = \delta_i (P^1 x_i)$. In particular, the action of $\aA_p$ on $\End(\aP_n)$ restricts to $\NH_n$.
\end{proposition}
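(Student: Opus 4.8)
The plan is to recognize the induced action of $\sap$ on $\Endo(\aP_n)$ as conjugation by the \emph{total} Steenrod power and then to exploit that this total power is a ring endomorphism of $\aP_n$. First I would assemble the reduced powers into $\mathbf{P} = \sum_{n\geq 0}P^n$, regarded as an operator on the graded ring $\aP_n$. By the Cartan formula $\mathbf{P}$ is multiplicative, and by \eqref{powerofgen} it is the graded $\FF_p$-algebra endomorphism determined by $\mathbf{P}(x_j) = x_j + x_j^p$. Since $\Delta(\mathbf{P}) = \mathbf{P}\otimes\mathbf{P}$ and $\e(\mathbf{P}) = 1$, the element $\mathbf{P}$ is grouplike, so $\antip(\mathbf{P}) = \sum_m\antip(P^m)$ acts as the graded inverse $\mathbf{P}^{-1}$ (which exists because $\mathbf{P} = \Id + (\text{degree-raising})$). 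Using $\antip = \antip^{-1}$ together with cocommutativity, the defining action formula collapses, for grouplike input, to conjugation: $\overline{\mathbf{P}}f = \mathbf{P}\circ f\circ\mathbf{P}^{-1}$, where $\overline{\mathbf{P}} = \sum_n\overline{P}^n$ and $\overline{P}^n f$ is exactly the summand shifting internal degree by $2n(p-1)$. All of the infinite sums here are to be read in the graded completion, so that each degree component is an honest operator identity on $\aP_n$.

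For the polynomial generators this gives the formula immediately: because $\mathbf{P}$ is a ring homomorphism, conjugating multiplication by a polynomial $g$ returns multiplication by $\mathbf{P}(g)$, and extracting the degree-$n$ component yields $\overline{P}^n x_i = P^n x_i$, and more generally $\overline{P}^n$ sends multiplication by any $g\in\aP_n$ to multiplication by $P^n g$. (One could equivalently derive this directly from the antipode axiom $\sum_j P^{n-j}\antip(P^j) = \e(P^n)$ together with the Cartan expansion of $P^{n-j}(x_i\cdot z)$, using that $P^k x_i = 0$ for $k\geq 2$.)

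The main computation is the formula for $\d_i$. Here I would start from the operator identity $(x_i - x_{i+1})\,\d_i = \Id - \s_i$ on $\aP_n$, which is just a restatement of \eqref{divopeq}, and apply $\mathbf{P}$ on the left. Since $\mathbf{P}$ is a ring homomorphism commuting with $\s_i$ (both are determined by their effect on the $x_j$), the right-hand side becomes $(\Id - \s_i)\mathbf{P} = (x_i - x_{i+1})\,\d_i\,\mathbf{P}$, while the left-hand side becomes $\mathbf{P}(x_i - x_{i+1})\cdot\mathbf{P}\d_i$. The key computation is $\mathbf{P}(x_i - x_{i+1}) = (x_i - x_{i+1}) + (x_i^p - x_{i+1}^p) = (x_i - x_{i+1})(1 + s_i)$, with $s_i$ the polynomial of \eqref{sdefeqn}. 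Cancelling the common factor $x_i - x_{i+1}$ in the integral domain $\aP_n$ gives $(1 + s_i)\,\mathbf{P}\,\d_i = \d_i\,\mathbf{P}$, hence $\overline{\mathbf{P}}\d_i = \mathbf{P}\d_i\mathbf{P}^{-1} = (1+s_i)^{-1}\d_i = \sum_{n\geq 0}(-1)^n s_i^n\,\d_i$. Reading off the component of degree $2n(p-1) - 2$ produces $\overline{P}^n\d_i = (-1)^n s_i^n\d_i$, as claimed. The same identity can be obtained more laboriously by inserting the commutator relation of Proposition \ref{commutatorprop} and the evaluation of Proposition \ref{sformulaprop} into the defining sum $\overline{P}^n\d_i = \sum_{j}P^{n-j}\d_i\antip(P^j)$, but the grouplike argument avoids that bookkeeping entirely.

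For the final ``in particular'' statement, the Cartan formula shows that the set of $f\in\Endo(\aP_n)$ satisfying $\overline{P}^k f\in\NH_n$ for all $k$ is an $\FF_p$-subalgebra of $\Endo(\aP_n)$. It contains each $x_i$, whose images are multiplications by the polynomials $P^k x_i$ and hence lie in $\NH_n$, and each $\d_i$, whose images $(-1)^k s_i^k\d_i$ lie in $\NH_n$ since $s_i\in\aP_n$ and $\NH_n$ is closed under composition. As these generate $\NH_n$, the subalgebra is all of $\NH_n$, so the $\sap$-action preserves $\NH_n\otimes\FF_p$. The one point requiring care is the formal handling of the grouplike total power $\mathbf{P}$ — justifying $\antip(\mathbf{P}) = \mathbf{P}^{-1}$ and the legitimacy of extracting graded components from the infinite sums — which is routine once everything is organized by the internal grading so that only finitely many terms contribute in each degree.
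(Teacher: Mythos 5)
Your proof is correct, but it takes a genuinely different route from the paper's. The paper's argument (Section \ref{actiononnhproof}) expands $\overline{P}^n(\d) = [P^n,\d] + \sum_{j=1}^{n-1}[P^j,\d]\antip(P^{n-j})$, substitutes the commutator formula of Proposition \ref{commutatorprop} (itself proved by induction via the Cartan formula and $\Sym_2$-linearity), and then unwinds the antipode recursion by hand until two residual double sums cancel. You instead exploit that the total power $\mathbf{P} = \sum_n P^n$ is grouplike, so $\antip(\mathbf{P}) = \mathbf{P}^{-1}$ and the induced action on $\End(\aP_n)$ becomes conjugation $f\mapsto \mathbf{P}f\mathbf{P}^{-1}$ in the degree-completed sense; the whole proposition then falls out of the single multiplicative identity $\mathbf{P}(x_i - x_{i+1}) = (x_i - x_{i+1})(1+s_i)$ applied to the defining relation $(x_i - x_{i+1})\d_i = \Id - \s_i$, together with cancellation in the integral domain $\aP_n$. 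Note that your intermediate identity $(1+s_i)\mathbf{P}\d_i = \d_i\mathbf{P}$, read off in graded components, actually re-proves Proposition \ref{commutatorprop}, so your argument subsumes that result rather than invoking it; it is also consonant with the paper's own use of $\hat{P} = \sum_k P^k$ in the proofs of Proposition \ref{sformulaprop} and Lemma \ref{generalizedpslemma}, just applied one level earlier. What the paper's route buys is that every identity is a finite sum of honest operators, with no completion needed; your route trades that for conceptual brevity, at the cost of the (routine, and correctly flagged) bookkeeping that all formal sums are organized by internal degree, which is harmless here because all degrees in sight are bounded below, so each graded component is a finite sum. Your handling of the ``in particular'' clause --- that the set of $f$ with $\overline{P}^k f\in\NH_n$ for all $k$ is a subalgebra containing the generators, by the Cartan formula for the induced action on $\End(\aP_n)$ --- is also more explicit than the paper's, which leaves the $x_i$ computation as an exercise.
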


For an argument, see Section \ref{actiononnhproof}.

\begin{remark}
This proposition can be used to compute the action of the Steenrod reduced
power operations on the Schubert polynomials $\delta_{w^{-1} w_0} x^\d$
mentioned in Theorem \ref{BGGD}.
\end{remark}

In Section \ref{pdgstr} we saw that every Steenrod algebra structure gives
rise to a family of $p$-differential graded algebra structures defined by
Margolis differentials. When coefficients are taken in the field $\FF_p$
Corollary \ref{nilHeckeprop} and Proposition \ref{actiononnhprop} state that
there is a standard Steenrod algebra structure on the nilHecke algebras. In
Theorem \ref{pdiffthm} below, Propositions \ref{sformulaprop} and
\ref{actiononnhprop} will be used to derive explicit formulas for these
Margolis differentials.

\begin{theorem}\label{pdiffthm}
There exists a standard family of $p$-differentials $\{d_k\}_{k=1}^{\infty}$
on the nilHecke algebras $\NH_n\otimes \FF_p$. Each differential $d_k$ is
uniquely determined by its values on generators:
$$d_k x_i = x_i^{p^k}\conj{ and } d_k\delta_i = (-1)^{l_n} s^{l_n}_i \delta_i.$$

where $l_n = p^n + p^{n-1} + \cdots + p + 1$.
\end{theorem}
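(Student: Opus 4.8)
The plan is to realize each $d_k$ as the conjugation action of the primitive Margolis differential $d_k = P^0_k\in\sap$ on $\NH_n = \Endo_{\Sym_n}(\aP_n)$ and to reduce the whole statement to one commutator computation on the polynomial ring $\aP_n$. First I would record the general principle that a primitive element $d$ of $\sap$, for which $\Delta(d) = d\otimes 1 + 1\otimes d$ and hence $\antip(d) = -d$, acts on $\Endo(\aP_n)$ through the module structure of Section \ref{expmodformsec} as the commutator derivation
\begin{equation*}
\overline{d}(f) = d\circ f - f\circ d = [d,f],
\end{equation*}
where $d$ and $f$ are viewed as $\FF_p$-linear operators on $\aP_n$; this drops out of the defining formula $(\overline{d}f)(y) = d_{(2)}\, f(\antip(d_{(1)})(y))$ once the coproduct is expanded. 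Since $\theta\mapsto\overline{\theta}$ is an algebra homomorphism (the module structure of Proposition \ref{algstuffprop} is a genuine left action), $\overline{d_k}$ is a derivation of $\Endo(\aP_n)$, and because $d_k^p = 0$ in $\sap$ (here $0 = s < t = k$, by the fact recalled after Definition \ref{margolisdef}) we obtain $(\overline{d_k})^p = \overline{d_k^{\,p}} = 0$. By Proposition \ref{actiononnhprop} the action preserves $\NH_n$, so $\overline{d_k}$ restricts to a $p$-differential there; being a derivation it is determined by its values on $x_i$ and $\delta_i$, which is the uniqueness clause.

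I would then compute these two values. For $x_i$, regarded as the multiplication operator, the Leibniz rule collapses the commutator to multiplication by $d_k(x_i)$; since $d_k$ acts on $\aP_n$ as a derivation with $d_k(x_i) = x_i^{p^k}$ (the $\CC P^\infty$ computation of Section \ref{margolishomsec}, applied one tensor factor at a time), this gives $\overline{d_k}(x_i) = x_i^{p^k}$. The substantive step is $\overline{d_k}(\delta_i) = [d_k,\delta_i]$. Here I would use that $d_k$ commutes with the transposition $\s_i$ — true because $\s_i d_k \s_i^{-1}$ is again a derivation and agrees with $d_k$ on generators — and push $d_k$ through the quotient defining $\delta_i f = (f - \s_i f)/(x_i - x_{i+1})$. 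Writing $\delta_i f = N/u$ with $N = f - \s_i f$ and $u = x_i - x_{i+1}$ and applying the derivation $d_k$ to the identity $N = u\cdot\delta_i f$ yields, after cancellation,
\begin{equation*}
[d_k,\delta_i](f) = d_k(\delta_i f) - \delta_i(d_k f) = -\,\frac{x_i^{p^k} - x_{i+1}^{p^k}}{x_i - x_{i+1}}\,\delta_i(f).
\end{equation*}

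Finally I would simplify the coefficient by Frobenius: in characteristic $p$ one has $x_i^{p^k} - x_{i+1}^{p^k} = (x_i - x_{i+1})^{p^k}$ and $s_i = (x_i^p - x_{i+1}^p)/(x_i - x_{i+1}) = (x_i - x_{i+1})^{p-1}$, so the ratio equals $(x_i - x_{i+1})^{p^k - 1} = s_i^{\,l_k}$ with $l_k = (p^k - 1)/(p-1) = 1 + p + \cdots + p^{k-1}$. This identifies $\overline{d_k}(\delta_i)$ with the asserted expression and fixes the exponent. I expect the only real friction to be bookkeeping: checking that the derivation identity genuinely survives division by $x_i - x_{i+1}$ (equivalently, that $d_k$ preserves the $\s_i$-antisymmetric polynomials divisible by $x_i - x_{i+1}$), and reconciling the sign and the precise index of $l_k$ with the statement, since the computation above produces the coefficient $-1$, automatic for $p=2$ and to be matched against $(-1)^{l_k}$ for odd $p$. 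As an independent check one can instead feed the recurrence $d_k = d_{k-1}P^{p^{k-1}} - P^{p^{k-1}}d_{k-1}$ through Propositions \ref{actiononnhprop} and \ref{sformulaprop}, but the non-primitivity of $P^{p^{k-1}}$ forces Cartan expansions and makes that route considerably longer.
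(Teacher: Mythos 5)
Your proposal is correct and takes a genuinely different route from the paper's proof. The paper proceeds by induction through the recursion \eqref{diffeqn}: it first proves Lemma \ref{generalizedpslemma}, computing $P^k s_i^n = (-1)^k\binom{n}{k}^p s_i^{k+n}$ via Euler's identity for $\bigl((1-x^p)/(1-x)\bigr)^n$, and then pushes the commutators $d_{n+1} = [d_n, P^{p^n}]$ through Proposition \ref{actiononnhprop}, tracking a coefficient $C_n$ that is shown to equal $1$ by evaluating Euler's identity at $x=1$. You bypass the recursion and the binomial bookkeeping entirely: primitivity of $d_k$ turns the action \eqref{bar} on $\End(\aP_n)$ into the commutator $[d_k,-]$, and the Frobenius identities $x_i^{p^k} - x_{i+1}^{p^k} = (x_i - x_{i+1})^{p^k}$ and $s_i = (x_i - x_{i+1})^{p-1}$ reduce the value on $\delta_i$ to a single application of the Leibniz rule. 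Your route is shorter, treats all $k$ at once, and delivers the derivation property, the nilpotency $(\overline{d_k})^p = 0$, and the uniqueness clause essentially for free; the paper's route has the merit of staying inside the operator calculus of Propositions \ref{commutatorprop}--\ref{actiononnhprop} already developed. The step you flag about dividing by $x_i - x_{i+1}$ is harmless: $d_k(x_i - x_{i+1}) = (x_i - x_{i+1})^{p^k}$ is itself divisible by $x_i - x_{i+1}$ and $\aP_n$ is a domain, so the cancellation is legitimate.

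On the sign: your coefficient is forced, and the discrepancy you flag is real, but it lies in the paper's statement rather than in your argument. Since $d_k$ is primitive it acts on $\aP_n$ as a derivation, so the normalization $d_k x_i = x_i^{p^k}$ determines $d_k$ on all of $\aP_n$, and then $[d_k,\delta_i] = -\,s_i^{\,l}\delta_i$ with $l = 1 + p + \cdots + p^{k-1}$, uniformly in $k$ (your $l_k$ is the paper's $l_{k-1}$; the ``$l_n$'' in the statement is an index slip, and both mean $(p^k-1)/(p-1)$). The sign $(-1)^{l}$ in the paper's proof instead belongs to the element defined by the recursion \eqref{marggg}, which under the standard structure satisfies $d_k x_i = (-1)^{k-1}x_i^{p^k}$ rather than $x_i^{p^k}$: for instance $d_2 x_i = -P^p(x_i^p) = -x_i^{p^2}$, since $P^p x_i = 0$. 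For odd $p$ one has $(-1)^{l} = (-1)^k = (-1)\cdot(-1)^{k-1}$, which exactly accounts for the difference between the two normalizations of $d_k$ (dual to $\xi_k$ versus recursion-defined). Consequently, for odd $p$ and even $k$ the two displayed formulas of the theorem cannot hold simultaneously; your pair is the internally consistent one for the normalization $d_k x_i = x_i^{p^k}$, so no reconciliation beyond fixing a normalization is possible. At $p=2$, of course, all signs disappear.
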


In light of the discussion in Section \ref{margolishomsec}, it is only
necessary to verify the last equation. A proof is given in Section
\ref{pdiffproof}.

\section{Stable Module Structures on Categorified Quantum Groups}\label{qmodulestrsec}

In Sections \ref{nilHeckeprop} and \ref{khqisec}, Steenrod module structures
are defined on the nilHecke algebras.  In this section, we review how this
gives rise to extensions of the categorification $\cpquantsl$ of the
positive part of $\quantsl$. Material included here summarizes the
references, see \cite{KhHopfological, QiHopfological, KhQi} and
\cite{Margolis, Palmieri}.

In what follows, we fix a finite dimensional Hopf algebra $H$ and an $H$
module algebra $A$.

\subsection{Categories over Module Categories}\label{modmodcatsec}

Since $A$ is an object of the category $H\gmodule$ of graded $H$ modules, we
use the symbol $A_{|H}$ to denote $A$ endowed with the $H$ structure. There
is a category $A_{|H}\gmodule$ of left $A_{|H}$ module objects in the
category $H\gmodule$ and there is tensor product,
\begin{equation}\label{ot1}
\otimes : H\gmodule \times A_{|H}\gmodule \to A_{|H}\gmodule.
\end{equation}

If $M$ is an $H$ module and $N$ is a $A_{|H}$ module then $M\otimes N$ is an
object in the category $A_{|H}\gmodule$. It is an $H$ module because $N$ is
also an $H$ module and it is an $A_{|H}$ module because if $m\otimes n\in
M\otimes N$ and $a\in A$ then the rule
$$a\cdot (m\otimes n) = m \otimes (a\cdot n)$$

determines an $A_{|H}$ module structure on $M\otimes N$. Equation
\eqref{ot1} allows us to think of the category $A_{|H}\gmodule$ as a module
over the category $H\gmodule$.

When $\aA$ is a finite dimensional sub-Hopf algebra of the Steenrod algebra
$\sap$, Corollary \ref{nilHeckeprop} implies the nilHecke algebra $\NH_n$
can be endowed with an $\sap$ module structure. It follows that there is a
functor,
$$\otimes :  \aA\gmodule \times \NH_{n|\aA}\gmodule \to \NH_{n|\aA}\gmodule.$$

The relationship between these two abelian categories is quite interesting.
It may be best explored using other language, see \cite{Schwartz,
  Kerler}. However, the small quantum group in \cite{KhQi} cannot be
constructed without the relations introduced by a passage to the stable
category. This is the next step in our discussion.

\subsection{Stable and Derived Stable Categories}\label{stabcatsec}

\begin{definition}\label{stabmoddef}
For any finite dimensional Hopf algebra $H$, the category $H\gmodule$ has a
quotient $H\stabgmodule$, called the category of \emph{stable modules},
which is obtained by declaring a map $f : M\to N$ in $H\gmodule$ to be zero
when it factors through a projective $H$ module.
\end{definition}

One consequence of this definition is that two $H$ modules $M$ and $N$
become isomorphic in the stable category $H\stabgmodule$ if and only if there
exist projective modules $P$ and $Q$ so that 
\begin{equation}\label{stabeq}
M\oplus P \cong N \oplus Q
\end{equation}

in the category $H\gmodule$.  Since $H$ is a finite dimensional Hopf
algebra, $P$ and $Q$ can be taken to be free modules or injective modules.

If $A$ is an $H$ module algebra and $A_{|H}\gmodule$ is the associated category
of $A$ modules (see Section \ref{modmodcatsec}) then for any $K\in
A_{|H}\gmodule$, the module $H\otimes K$ is free \cite{Montgomery} and so
$$H\otimes K \cong 0,$$
in the stable category $H\stabgmodule$. This observation suggests the next definition.

\begin{definition}\label{momodef}
Let $H$ be a finite dimensional Hopf algebra. Then, for any $H$ module
algebra $A$, the category of \emph{stable $A$ modules} is given by the quotient
$$A_{|H}\stabgmodule = A_{|H}\gmodule/I,$$

where $I$ is the ideal of $A_{|H}$ module maps $f : M \to N$ which factor
through an $A_{|H}$ module of the form $H\otimes K$. The above category is
sometimes denoted in other ways, see \cite{QiHopfological} Sections 2.8 and
4.1.
\end{definition}

The quotient in Definition \ref{momodef} is compatible with Definition
\ref{stabmoddef} in the sense that the tensor product \eqref{ot1} descends
to a functor between stable categories.

Given an $A_{|H}$ module $M$ the forgetful functor determines an $H$ module $\Fgt(M)$. This induces a functor between stable categories,
$$\stabFgt : A_{|H}\stabgmodule \to H\stabgmodule.$$

\begin{definition}
A map $f : M \to N$ in the category $A_{|H}\stabgmodule$ is a
\emph{quasi-isomorphism} when the map $\stabFgt(f)$ is an isomorphism.
\end{definition}

Proposition 4 of \cite{KhHopfological} shows that quasi-isomorphisms $\aQ$
in $A_{|H}\stabgmodule$ form a localizing class; they can be inverted, see
\cite{GelfandManin} Section 3.2.

\begin{definition}
The \emph{derived category} of stable $A$ modules is the category obtained
from the stable category of $A$ modules by inverting quasi-isomorphisms.
$$\aD(A,H) = A_{|H}\stabgmodule[\aQ^{-1}]$$
\end{definition}

The tensor product descends to the quotient.
$$\otimes : \aD(k,H) \times \aD(A,H) \to \aD(A,H)$$

A theorem of Qi below shows that maps between algebras define induction
and restriction functors between derived categories.

\begin{theorem}\label{indres}
Suppose that $A$ and $B$ are $H$ module algebras. Then a map $f : A \to B$
determines induction and restriction functors,
$$\Ind_A^B : \aD(A,H) \rightleftarrows \aD(B,H) : \Res_A^B$$

which form an adjunction,
$$\Hom_{\aD(A,H)}(\Ind_A^B(M),N) \cong \Hom_{\aD(B,H)}(M,\Res_A^B(N)).$$

Moreover, when $f$ is a quasi-isomorphism the induction and restriction
functors define an equivalence of categories.
\end{theorem}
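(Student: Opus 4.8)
The plan is to construct the two functors first at the level of the equivariant abelian categories $A_{|H}\gmodule$ and $B_{|H}\gmodule$, verify the adjunction there, and then transport everything through the stable quotient of Definition \ref{momodef} and the subsequent localization. At the module level I set $\Res_A^B(N)=N$, regarded as an $A$ module through $f$ and carrying its original $H$ module structure; this is manifestly functorial. For induction I take $\Ind_A^B(M)=B\otimes_A M$ with $B$ acting by left multiplication. The only point requiring care is the $H$ module structure on $B\otimes_A M$: since $A$, $B$ and $M$ all live in $H\gmodule$ and $f$ is a map of $H$ module algebras, the coproduct of $H$ endows $B\otimes_A M$ with a diagonal action for which the tensor relations are $H$-stable. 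The tensor-hom adjunction, carried out internally to $H\gmodule$, then yields the natural isomorphism $\Hom_{B_{|H}}(B\otimes_A M,N)\cong\Hom_{A_{|H}}(M,\Res_A^B N)$ on the nose.

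Next I descend to the stable categories. For $\Res_A^B$ this is immediate: restriction does not alter the underlying $H$ module, so it sends a $B_{|H}$ module of the form $H\otimes K$ to an $A_{|H}$ module that is again free over $H$, hence it preserves the defining ideal and passes to $B_{|H}\stabgmodule\to A_{|H}\stabgmodule$. For $\Ind_A^B$ the key lemma is the computation $\Ind_A^B(H\otimes K)\cong H\otimes\Ind_A^B(K)$. This is the standard Hopf-algebraic untwisting: because $A$ acts only on the $K$ factor of $H\otimes K$, one uses the coproduct and antipode of $H$ to trivialize the diagonal action on the $H$ tensorand, after which $B\otimes_A(-)$ pulls past $H\otimes(-)$. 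Consequently $\Ind_A^B$ also preserves free objects and descends to the stable categories, and the module-level adjunction descends with it.

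To reach the derived categories I left-derive induction by replacing $M$ with a cofibrant (property-P) resolution in the sense of \cite{KhHopfological, QiHopfological}, while $\Res_A^B$ already preserves quasi-isomorphisms because $\stabFgt\circ\Res_A^B=\stabFgt$ and quasi-isomorphisms are detected by $\stabFgt$. The descended adjunction $\Hom_{\aD(B,H)}(\Ind_A^B M,N)\cong\Hom_{\aD(A,H)}(M,\Res_A^B N)$ then follows formally once both functors are known to respect $\aQ$. Finally, when $f$ is a quasi-isomorphism I must show the unit $M\to\Res_A^B\Ind_A^B M$ and counit $\Ind_A^B\Res_A^B N\to N$ are isomorphisms in the derived categories; this reduces to the derived base-change statement that $B\otimes_A^{\mathbb{L}}(-)$ is inverse to restriction along a quasi-isomorphism, which one verifies on the generating free objects $H\otimes K$ using the lemma above together with the hypothesis that $f$ induces an isomorphism on the relevant homology. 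The main obstacle is twofold: producing a well-behaved left-derived functor for the naive induction $B\otimes_A(-)$ in the hopfological setting and establishing the free-object lemma $\Ind_A^B(H\otimes K)\cong H\otimes\Ind_A^B(K)$, and then checking that the unit and counit become isomorphisms precisely when $f$ is a quasi-isomorphism. Both steps use the finite-dimensionality and Hopf structure of $H$ essentially, and both are handled in Qi's framework \cite{QiHopfological}.
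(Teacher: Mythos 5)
The paper itself contains no proof of this theorem: it is quoted from Qi's work, with the reader referred to Section 8 of \cite{QiHopfological}, and nothing in Section \ref{proofsec} concerns it. Your outline is, in substance, a reconstruction of that cited argument: underived induction $B\otimes_A -$ and restriction along $f$ with the diagonal $H$-action, the untwisting isomorphism $B\otimes_A(H\otimes K)\cong H\otimes(B\otimes_A K)$ to show both functors preserve the ideal of Definition \ref{momodef} and hence descend to the stable categories, left-derivation of induction via property-P (cofibrant) resolutions, and a unit/counit check for the final equivalence. So your route is the same one the paper points to.

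However, one step, as you state it, would fail. You propose to verify that the unit and counit become isomorphisms ``on the generating free objects $H\otimes K$'', using your untwisting lemma. But the objects $H\otimes K$ are precisely the ones killed by the stable quotient: the identity of $H\otimes K$ factors through an object of that form, so it lies in the ideal $I$ of Definition \ref{momodef}, and $H\otimes K\cong 0$ in $A_{|H}\stabgmodule$ and hence in $\aD(A,H)$ (the paper notes exactly this). These objects generate nothing, and any property checked on them is vacuous. The correct test objects are the $A$-free modules $A\otimes V$ with $V\in H\gmodule$, which are the filtration subquotients of property-P modules. On these the unit is $f\otimes\Id_V : A\otimes V \to B\otimes V$, and it is a quasi-isomorphism whenever $f$ is, because $\stabFgt(f\otimes\Id_V)=\stabFgt(f)\otimes\Id_V$ and $-\otimes V$ is a functor on $H\stabgmodule$, hence preserves isomorphisms; one then climbs the filtration of a cofibrant replacement (and passes to summands and colimits) to treat arbitrary objects. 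Your untwisting lemma is the right tool for descending the functors to the stable categories, but it cannot carry the equivalence step; that step needs the $A\otimes V$ reduction just described.
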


See \cite{QiHopfological} Section 8.

\subsection{Extending the Categorification}\label{extcat}
In this section we recall how to apply the material reviewed in Sections
\ref{modmodcatsec} and \ref{stabcatsec} to produce 2-categories
$\aU^+_{\aA}$ which are extensions the categorification $\aU^+_{\FF_p}$ of
$U^+$ by finite dimensional sub-Hopf algebras $\aA$ of the Steenrod algebra.

Fix a finite dimensional sub-Hopf algebra $\aA \subset \sap$ and a Steenrod
module structure on the nilHecke algebras $\NH_n$; either the standard one
in Corollary \ref{nilHeckeprop} or the one from Section \ref{khqisec}.
Since the nilHecke algebras $\NH_n$ are $\aA$ module algebras. There are
derived categories of the form $\aD(\NH_n, \aA)$. A proposition is needed to
relate these categories to one another.

\begin{proposition}\label{indresdefprop}
There is an $\aA$ module algebra homomorphism
$$i_{n+m}: \NH_n \otimes \NH_m \to \NH_{n+m}$$ 

which determines induction and restriction functors,
$$\Ind : \aD(\NH_n\otimes \NH_m,\aA) \rightleftarrows \aD(\NH_{n+m},\aA) : \Res$$
\end{proposition}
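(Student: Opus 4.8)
The plan is to construct $i_{n+m}$ as the standard block inclusion and then to invoke Qi's Theorem \ref{indres}, so that the only genuine work is verifying that this inclusion is a homomorphism of $\aA$ module algebras.

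First I would define $i_{n+m}$ on generators by the block-diagonal rule
$$x_i\otimes 1 \mapsto x_i,\quad 1\otimes x_j \mapsto x_{n+j},\quad \delta_i\otimes 1\mapsto \delta_i,\quad 1\otimes \delta_j \mapsto \delta_{n+j},$$
for $1\le i \le n$ (resp.\ $1\le i\le n-1$) and $1\le j\le m$ (resp.\ $1\le j\le m-1$), where the connecting operator $\delta_n$ is deliberately omitted. To see that this extends to an algebra homomorphism it suffices to check the nilHecke relations. The relations internal to each block hold because each block is a copy of $\NH_n$ or $\NH_m$; the only relations left to verify are those between the two blocks, and these all reduce to far commutativity, since any first-block index $i\le n-1$ and any second-block index $n+j\ge n+1$ satisfy $|i-(n+j)|\ge 2$, so the corresponding generators commute. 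This is exactly the commutation built into $\NH_n\otimes\NH_m$, so $i_{n+m}$ is well defined. Conceptually this is the map induced by the inclusion $\Sigma_n\times\Sigma_m\hookrightarrow\Sigma_{n+m}$, equivalently by $\Fl_n\times\Fl_m$ sitting inside a partial flag variety of $\CC^{n+m}$.

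Next I would equip $\NH_n\otimes\NH_m$ with its $\aA$ module algebra structure: since $\aA\subset\sap$ is a sub-Hopf algebra and each factor is an $\aA$ module algebra by Corollary \ref{nilHeckeprop}, the tensor product carries the diagonal action through the coproduct, $P^N\cdot(a\otimes b)=\sum_{c+d=N}(\overline{P}^c a)\otimes(\overline{P}^d b)$. To prove that $i_{n+m}$ is $\aA$ equivariant it is enough to check equivariance on generators, because $i_{n+m}$ is an algebra map and both sides act on products through the Cartan/coproduct formula. Here the explicit formulas of Proposition \ref{actiononnhprop} do the work: $\overline{P}^N x_i=P^N x_i$ depends only on $x_i$, while $\overline{P}^N\delta_i=(-1)^N s_i^N\delta_i$ with $s_i=\delta_i(P^1 x_i)$ depends only on $x_i$, $x_{i+1}$ and $\delta_i$. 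Since $P^d$ annihilates the unit for $d>0$ and $P^0=\Id$, the coproduct sum collapses, for example
$$P^N\cdot(\delta_i\otimes 1)=(\overline{P}^N\delta_i)\otimes 1,$$
and applying $i_{n+m}$ reproduces $\overline{P}^N\delta_i\in\NH_{n+m}$ with the same (reindexed) polynomial $s_i$; the analogous computation for $1\otimes\delta_j$ carries $s_j$ to $s_{n+j}$. The crucial point is that these formulas are local in the index and never involve the omitted generator $\delta_n$, so no cross-block terms appear and equivariance holds on every generator.

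Finally, with $i_{n+m}\co\NH_n\otimes\NH_m\to\NH_{n+m}$ in hand as a homomorphism of $\aA$ module algebras, I would apply Theorem \ref{indres} with $A=\NH_n\otimes\NH_m$, $B=\NH_{n+m}$, $H=\aA$ and $f=i_{n+m}$. This immediately produces the adjoint pair
$$\Ind:\aD(\NH_n\otimes\NH_m,\aA)\rightleftarrows\aD(\NH_{n+m},\aA):\Res,$$
completing the proof. I expect the main obstacle to be the equivariance verification of the middle paragraph: it is not deep, but it relies essentially on the locality of the action formulas in Proposition \ref{actiononnhprop} together with the deliberate omission of $\delta_n$, which is precisely what simultaneously guarantees the algebra-homomorphism property and $\aA$ equivariance.
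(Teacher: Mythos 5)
Your proof is correct and takes essentially the same route as the paper: the same block inclusion $a\otimes b\mapsto ab$ obtained by relabelling generators, verification that it is an algebra homomorphism and an $\aA$ module map, followed by an appeal to Theorem \ref{indres}. The only difference is level of detail --- where the paper disposes of equivariance by citing the Cartan formula, you check it generator by generator using the explicit formulas of Proposition \ref{actiononnhprop} and the locality of $s_i$, which is a careful filling-in of the same argument rather than a different one.
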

\begin{proof}
After labelling the generators $x_i, \d_i$ of $\NH_n$ by $i = 1,...,n$ and
the generators $x_j, \d_j$ of $\NH_m$ by $j = n+1,...,n+m$ the map $i_{n+m}$
is defined by, $a\otimes b \mapsto ab.$ The map $i_{n+m}$ is an algebra
homomorphism. It suffices to check to check that it is an $\sap$ module
map. This follows from the Cartan formula, see Definition
\ref{steenrodaxioms}.
\end{proof}

The 2-categories $\aU^+_{\aA}$ defined below extend the 2-category
$\aU^+_{\FF_p}$ which categorifies the positive half of the quantum group
$\quantsl$.

\begin{definition}{($\aU^+_{\aA}$)}\label{upluscat}
For each finite dimensional sub-Hopf algebra $\aA$ of the Steenrod algebra
there is a 2-category,
$$\aU^+_{\aA} = \bigoplus_{n} \aD(\NH_n,\aA)$$

with objects corresponding to the categories $\aD(\NH_n,\aA)$, morphisms are
generated by compositions of the induction and restriction functors defined
in Proposition \ref{indresdefprop} and 2-morphisms given by natural
transformations.
\end{definition}

The standard Steenrod structure explored in Section \ref{modulestrsec}
restricts to a countable family of $p$-DG structures on the nilHecke
algebras.  Choosing an appropriate subset of the differentials generates a
sub-Hopf algebra of the Steenrod algebra over which the categorification
persists.

In Section \ref{khqisec} the standard Steenrod structure is modified to
agree with choices made in the Khovanov and Qi construction \cite{KhQi}. A
similar picture may hold for this construction, see Section
\ref{khqitwistedsec}. 

It is not necessarily true that presentations of the nilHecke algebras
$\NH_n$ suffice to present the 2-category $\aU^+_{\aA}$. Since the derived
tensor product is used to define the induction and restriction functors,
there may be other natural transformations arising from the Bar
construction.

In light of the construction outlined above, the discussion found in Section
\ref{pdgstr} represents an appealing picture.

\subsection{Grothendieck Groups}\label{grothsec}

There is an analogue $\aU^{c+}_{\aA}$ of $\aU^+_{\aA}$ in Definition
\ref{upluscat} above defined by using derived categories of compact objects
$\aD^c(\NH_n,\aA)$ in place of the categories $\aD(\NH_n,\aA)$, see
\cite{QiHopfological}.  A direct analogue of Proposition 3.28 \cite{KhQi}
implies that the functors in Definition \ref{upluscat} above descend to
functors defined on derived categories of compact objects. The Grothendieck
group of these categories can be defined.

\begin{conjecture}\label{conjconj}
Suppose that $\aU^{c+}_{\aA}$ is the compact analogue of Definition
\ref{upluscat}, defined using the natural Steenrod structure in Corollary
\ref{nilHeckeprop}. Then any such extension decategorifies an extension of
the ground field,
$$\K_0(\aU^{c+}_{\aA}) \cong U^+ \otimes \K_0(\aD(k,\aA)),$$

and no two extensions are equivalent as categories,
$$\aA \not\cong \aA' \Rightarrow \aU_{\aA}\not\cong\aU_{\aA'}.$$
\end{conjecture}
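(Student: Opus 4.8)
The plan is to treat the two assertions separately, reducing the Grothendieck group computation to the case of the symmetric polynomial rings and then reconstructing $\aA$ from the categorical data. For the first claim, write $\K_0(\aU^{c+}_{\aA}) = \bigoplus_n \K_0(\aD^c(\NH_n,\aA))$ and set $R = \K_0(\aD(k,\aA))$, which is a $\ZZ[q,q^{-1}]$-algebra once the internal grading shift is identified with multiplication by $q$. The key structural input is Definition \ref{endnhdef}, $\NH_n \cong \Endo_{\Sym_n}(\aP_n)$, together with the fact that $\aP_n$ is free of rank $n!$ over $\Sym_n$; this exhibits $\NH_n$ as Morita equivalent to $\Sym_n$, with the bimodule $\aP_n$ realizing the equivalence. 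By Proposition \ref{algstuffprop} every Hom-space occurring here is an $\aA$-module and the bimodule structure maps are $\aA$-equivariant, so the Morita equivalence lifts to an equivalence of triangulated $\aD(k,\aA)$-module categories
$$\aD^c(\NH_n,\aA)\ \simeq\ \aD^c(\Sym_n,\aA),$$
in the sense of the Hopfological machinery of \cite{QiHopfological}. First I would establish this equivariant Morita reduction carefully, since it converts the large non-commutative nilHecke computation into one over the connected graded polynomial algebra $\Sym_n = \FF_p[e_1,\dots,e_n]$.

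Next I would compute $\K_0(\aD^c(\Sym_n,\aA))$ and identify the multiplication. Because $\Sym_n$ is positively graded, connected, and a graded $\aA$-module algebra (the $\aA$-action being the Steenrod action on Chern classes recorded by the Wu formula \eqref{wuformeqn}), I expect it to behave like a cofibrant Hopfological algebra: its compact derived category should be generated by the single projective $\Sym_n$, giving $\K_0(\aD^c(\Sym_n,\aA)) \cong R$, free of rank one on the class of this generator. Summing over $n$ yields $\bigoplus_n R$ as an $R$-module, and it remains to match the algebra structure coming from the induction functors of Proposition \ref{indresdefprop} with that of $U^+$. This is the $p$-DG analogue of the Khovanov--Lauda--Rouquier identification: the bimodule $\NH_{a+b}$ over $\NH_a\otimes\NH_b$ decomposes with graded multiplicity the Gaussian binomial $\binom{a+b}{a}_q$, and the Cartan-formula equivariance used in Proposition \ref{indresdefprop} forces these multiplicity spaces to contribute scalars in $\ZZ[q,q^{-1}]\subset R$. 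Matching the rank-one generators with the divided powers $E^{(n)}$ then gives $\K_0(\aU^{c+}_{\aA}) \cong U^+\otimes R$. The main technical obstacle here is the rank-one computation: for $\aA = \mar_t$ one may hope to import the argument of \cite{KhQi} essentially verbatim, but for a higher-dimensional sub-Hopf algebra $\aA$ one must build an explicit cofibrant replacement (a ``fantastic filtration'') of $\Sym_n$ adapted to all of $\aA$ simultaneously, and control the possible failure of formality when several Margolis differentials interact.

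For the second claim I would reconstruct $\aA$ from the 2-category $\aU_{\aA}$. The object indexed by $n=0$ is $\aD(\NH_0,\aA) = \aD(k,\aA) = \aA\stabgmodule$, the stable module category of the finite-dimensional Hopf algebra $\aA$, and it plays the role of the monoidal unit: the module-category action $\aD(k,\aA)\times\aU_{\aA}\to\aU_{\aA}$ recovers $\aA\stabgmodule$ as a tensor triangulated category from the endomorphisms of this object. An equivalence $\aU_{\aA}\simeq\aU_{\aA'}$ would therefore produce a tensor triangulated equivalence $\aA\stabgmodule\simeq\aA'\stabgmodule$. I would then invoke tensor-triangular geometry: the Balmer spectrum of $\aA\stabgmodule$ is the projective spectrum of the cohomology $H^{*,*}(\aA;\FF_p)$, and the classification of sub-Hopf algebras in Theorem \ref{subhopfthm} should let one read off $\aA$ from this spectrum together with the induced grading. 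The hard part, and the reason the statement remains conjectural, lives precisely here: one must first verify that the bare 2-categorical data genuinely remembers the \emph{monoidal} structure on $\aA\stabgmodule$ and not merely its triangulated structure, and second, the tensor-triangular spectrum typically determines cohomology only up to F-isomorphism, so passing from the geometry back to an actual isomorphism of Hopf algebras $\aA\cong\aA'$ requires extra rigidity beyond what support varieties alone supply.
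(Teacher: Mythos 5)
You are attempting to prove a statement that the paper itself labels a \emph{conjecture}: the authors give no proof of either assertion, so there is nothing of theirs to compare your argument against. The surrounding text supplies only the base-ring computation (Lemma \ref{grothlemma} and Theorem \ref{grothfinitethm}, identifying $\K_0(\aA\stabgfmodule)$ with $\ZZ[q]$ modulo the graded dimension of $\aA$) and points to \cite{KhQi, QiHopfological} for the rank-one phenomenon in the single-differential case. Your proposal is therefore a research plan for an open problem, and judged as such it identifies the right intermediate statements; but both halves rest on steps that are themselves open, so it is not a proof, and the gaps are exactly where the authors stopped.

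Concretely, two gaps. First, the ``equivariant Morita reduction'' is not a reduction. In hopfological algebra, $\NH_n \cong \Endo_{\Sym_n}(\aP_n)$ together with the $\aA$-module structure on Hom spaces (Proposition \ref{algstuffprop}) does \emph{not} yield an equivalence $\aD^c(\NH_n,\aA) \simeq \aD^c(\Sym_n,\aA)$: one needs the bimodule $\aP_n$ to be cofibrant in Qi's sense, i.e. to admit a filtration by $(\Sym_n,\aA)$-submodules whose subquotients have the form $\Sym_n \otimes V$ for $\aA$-modules $V$. The Cartan formula mixes the Schubert basis across degrees with symmetric-function coefficients, so neither the filtration by Schubert degree nor its opposite is $\aA$-stable; producing a ``fantastic filtration'' compatible with all of $\aA$ at once (rather than with a single Margolis differential as in \cite{KhQi}) is precisely the hard core of the first claim, and the same issue recurs in your rank-one computation for $\Sym_n$. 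Second, for the non-equivalence claim, the extraction of $\aA\stabgmodule$ from the 2-category is unjustified: in Definition \ref{upluscat} the endomorphism category of the object $n=0$ consists of natural transformations between composites of induction and restriction functors, not of $\aD(k,\aA)$ itself, and a bare 2-equivalence $\aU_{\aA}\simeq\aU_{\aA'}$ carries no monoidal data. Even granting a tensor-triangulated equivalence $\aA\stabgmodule \simeq \aA'\stabgmodule$, the Balmer spectrum recovers the cohomology of $\aA$ only up to F-isomorphism (Palmieri's Quillen stratification for sub-Hopf algebras of $\sap$, cf. \cite{Palmieri}), so the final passage back to an isomorphism class of Hopf algebras fails as stated --- a point you flag yourself. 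In short: a sensible plan, consistent with the authors' intent, but with the essential steps still open on both halves; this matches the authors' decision to state the result as a conjecture rather than a theorem.
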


\subsubsection{The Base Category}\label{habirosec}

In this section we determine the Grothendieck group of the category of
stable modules over a finite dimensional sub-Hopf algebra of the Steenrod
algebra.  These categories are the base categories over which the
constructions in Section \ref{qmodulestrsec} were performed.

The letters $H$ or $A$ will be used in statements which hold for any finite
dimensional Hopf algebra or algebra respectively. The letter $\aA$ will
represent a finite dimensional sub-Hopf algebra of the Steenrod algebra
$\sap$. 

The category of positively graded finite dimensional left $A$ modules is
denoted $A\gfmodule$. The translation functor $-[1]$ is an endomorphism
which makes the Grothendieck group $\K_0(A\gfmodule)$ a module over the ring
$\ZZ[q]$ where $q = \K_0(- [1])$.

\begin{lemma}\label{grothlemma}
Suppose that $\aA\gfmodule$ is the category of positively graded modules
over a finite dimensional sub-Hopf algebra $\aA$ of the Steenrod
algebra. Then the Grothendieck group $\K_0(\aA\gfmodule)$ is isomorphic to
the polynomial ring,
$$\K_0(\aA\gfmodule) \cong \ZZ[q].$$
\end{lemma}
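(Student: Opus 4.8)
The plan is to exploit the fact that a finite dimensional sub-Hopf algebra $\aA\subset\sap$ inherits the \emph{connectedness} of the Steenrod algebra, and to reduce the computation of $\K_0(\aA\gfmodule)$ to a classification of the graded simple $\aA$ modules. Since every generator $P^k$ has strictly positive degree $|P^k| = 2k(p-1)$ while $P^0 = 1$, the algebra $\aA$ is non-negatively graded with $\aA_0 = \FF_p$; write $\aA_+ = \bigoplus_{i>0}\aA_i$ for its augmentation ideal, so that $\aA/\aA_+ \cong \FF_p$. Because the objects of $\aA\gfmodule$ are finite dimensional they have finite length, so by the Jordan--H\"older theorem the Grothendieck group $\K_0(\aA\gfmodule)$ is the free abelian group on the isomorphism classes of graded simple modules. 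The entire computation therefore reduces to enumerating these simples.

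The crux is the following claim: up to grading shift, the trivial module $\FF_p$ (the field placed in degree $0$ with $\aA_+$ acting by zero) is the only graded simple $\aA$ module. To prove it, let $S$ be a nonzero finite dimensional graded simple module and let $d$ be its lowest nonzero degree. Since $\aA$ is positively graded, $\aA_+\cdot S$ lies in degrees strictly greater than $d$, so $\aA_+\cdot S$ is a graded submodule missing $S_d\neq 0$; by simplicity $\aA_+\cdot S = 0$. Hence $S$ is a module over $\aA/\aA_+ \cong \FF_p$, that is, a graded $\FF_p$ vector space on which $\aA_+$ acts trivially, so that every graded subspace is automatically a submodule. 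Simplicity now forces $S$ to be one dimensional and concentrated in a single degree, which identifies it with a shift $\FF_p[n]$ of the trivial module. As the modules are positively graded, these shifts occur exactly for $n\geq 0$.

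It remains to assemble the answer. By the previous two steps $\K_0(\aA\gfmodule)$ is free abelian on the classes $[\FF_p[n]]$ with $n\geq 0$. Under the $\ZZ[q]$ module structure determined by $q = \K_0(-[1])$ one has $q^n[\FF_p] = [\FF_p[n]]$, so the assignment $q^n\mapsto[\FF_p[n]]$ defines a $\ZZ[q]$ module isomorphism $\ZZ[q]\xrightarrow{\ \sim\ }\K_0(\aA\gfmodule)$. I do not expect a genuine obstacle here; the only step requiring care is the classification of simples in the second paragraph, where one must use both the connectedness of $\aA$ (to force $\aA_+\cdot S$ into strictly higher degrees) and finite dimensionality (to guarantee a lowest degree and finite length). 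Everything else is the standard d\'evissage computation of the Grothendieck group of a finite length graded category.
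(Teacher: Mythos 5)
Your proof is correct, but it is organized differently from the paper's own argument. The paper gives a one-step filtration proof: for $M\in\aA\gfmodule$, filter by degree via $M_{\geq t} = \oplus_{l\geq t} M_l$; the subquotients $M_{\geq t}/M_{\geq t+1}$ carry a trivial $\aA$-action precisely because $\aA_+$ strictly raises degree (the same connectedness point you isolate), so $[M]$ is a non-negative integer combination of shifted trivial classes and the map $\ZZ[q]\to\K_0(\aA\gfmodule)$ is surjective. You instead run d\'evissage: finite dimensionality gives finite length, Jordan--H\"older makes $\K_0$ free abelian on the graded simples, and connectedness shows the simples are exactly the one-dimensional shifts of $\FF_p$. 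Both arguments turn on the identical key fact (that $\aA_+\cdot S$ lands in strictly higher degrees), but your route buys something the paper's terse proof leaves implicit: freeness, i.e., linear independence of the classes $[\FF_p[n]]$, so that $\ZZ[q]\to\K_0$ is injective and not merely surjective (the paper would need an extra remark, e.g.\ the graded-dimension homomorphism $\K_0\to\ZZ[q]$, to close this). The paper's version is shorter and avoids quoting the structure theorem for finite-length categories; yours is the more complete argument. One cosmetic point: the paper's convention is $M=\oplus_{l>0}M_l$, so the shifts of the trivial module that actually occur are indexed starting in degree $1$ rather than $0$; this only relabels the generators of the free abelian group and does not affect the isomorphism with $\ZZ[q]$.
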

\begin{proof}
Any such module $M = \oplus_{l>0} M_l,$ is filtered by setting, $M_{\geq t}
= \oplus_{l \geq t} M_l$.  The associated graded module $\oplus_t M_{\geq
  t}/M_{\geq t+1}$ is equal to $M$ in the Grothendieck group
$\K_0(\aA\gfmodule)$ and $\aA$ acts trivially on each summand for grading
reasons.
\end{proof}

The stable derived category $S(H\module)$ is the Verdier quotient
$\aK^\flat(H\module)/(\Inj H)$ of the homotopy category of chain complexes
of $H$ modules by the thick subcategory of complexes consisting of injective
modules. When $H$ is a finite dimensional Hopf algebra, Theorem 8.2
\cite{Krause} implies that the stable module category and the stable derived
category are equivalent as triangulated categories.
\begin{equation}\label{kraus}
H\stabmodule \cong S(H\module)
\end{equation}

Switching to finite dimensional graded modules and combining \eqref{kraus}
with Heller's Theorem \cite{WeibelK} implies that there is a short exact sequence,
\begin{equation}\label{hellerseq}
\K_0(\Inj H) \to \K_0(\aK^\flat(H\gfmodule))\to \K_0(H\stabgfmodule)\to 0.
\end{equation}

The Grothendieck group of the homotopy category of chain complexes is
isomorphic to the Grothendieck group of the underlying category. Since the
Hopf algebra $H$ is finite dimensional, injective modules are free modules
and there is an isomorphism,
\begin{equation}\label{injeq}
\K_0(\Inj H) \cong \inp{[H]}.
\end{equation}

\begin{corollary}
$$\K_0(\aA\stabgfmodule)\cong \ZZ[q]/\inpr{[\aA]},$$
where $[\aA] = \dim_q \aA$ is the image of $\aA$ in the Grothendieck group.
\end{corollary}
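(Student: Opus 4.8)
The plan is to feed the exact sequence \eqref{hellerseq} with $H = \aA$ and identify each of its three terms using the results already assembled above. First I would rewrite the middle term: since the Grothendieck group of the bounded homotopy category of chain complexes agrees with that of the underlying abelian category, one has $\K_0(\aK^\flat(\aA\gfmodule)) \cong \K_0(\aA\gfmodule)$, and Lemma \ref{grothlemma} identifies the latter with $\ZZ[q]$. The right-hand term is the object of interest, $\K_0(\aA\stabgfmodule)$, while the left-hand term is $\K_0(\Inj \aA)$, which by \eqref{injeq} is the subgroup $\inp{[\aA]}$ generated by the class of $\aA$. Exactness of \eqref{hellerseq} at its middle term then identifies $\K_0(\aA\stabgfmodule)$ with the cokernel of the leftmost map, so the whole computation reduces to understanding that map and the class of its generator.

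Next I would promote this to a statement about $\ZZ[q]$-modules. The translation functor $-[1]$ acts on all three categories and induces the $\ZZ[q]$-module structure with $q = \K_0(-[1])$; because the grading shifts $\aA[i]$ exhaust the indecomposable injectives and $[\aA[i]] = q^i[\aA]$, the group $\K_0(\Inj \aA)$ is in fact the free rank-one $\ZZ[q]$-module on the generator $[\aA]$, and the leftmost map in \eqref{hellerseq} is $\ZZ[q]$-linear. Consequently its image is not merely the cyclic subgroup generated by $[\aA]$ but the entire ideal $\inpr{[\aA]} \subset \ZZ[q]$.

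It then remains to compute the image of the generator $[\aA]$ under the map $\K_0(\Inj \aA) \to \K_0(\aA\gfmodule) \cong \ZZ[q]$. This map sends the injective module $\aA$, viewed as a complex concentrated in a single homological degree, to its class $[\aA]$ in $\K_0(\aA\gfmodule)$. Running the filtration argument of Lemma \ref{grothlemma} on $\aA$ regarded as a module over itself, every associated graded piece is a trivial module and the class of $\aA$ equals its graded dimension $\dim_q \aA$. Since \eqref{hellerseq} is exact on the right, the cokernel of the inclusion $\inpr{[\aA]} \hookrightarrow \ZZ[q]$ computes $\K_0(\aA\stabgfmodule)$, yielding the desired isomorphism $\K_0(\aA\stabgfmodule) \cong \ZZ[q]/\inpr{[\aA]}$ with $[\aA] = \dim_q \aA$.

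The main obstacle is the bookkeeping in the second paragraph: one must verify that the leftmost map in \eqref{hellerseq} is genuinely $\ZZ[q]$-linear, so that its image is the ideal $\inpr{[\aA]}$ rather than only the subgroup $\inp{[\aA]}$. This is precisely the distinction between the ring quotient $\ZZ[q]/\inpr{[\aA]}$ and a mere group-level cokernel, and it is what licenses stating the answer as a quotient ring. Everything else is a matter of assembling Lemma \ref{grothlemma}, the identification \eqref{injeq}, and the exactness already provided by \eqref{hellerseq}.
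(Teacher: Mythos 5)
Your proof is correct and follows essentially the same route as the paper, which proves the corollary exactly by combining Lemma \ref{grothlemma}, the Heller sequence \eqref{hellerseq}, and the identification \eqref{injeq}. Your extra check that the image of $\K_0(\Inj \aA)$ in $\ZZ[q]$ is the full ideal $\inpr{[\aA]}$ (via the shifted injectives $\aA[i]$ with classes $q^i[\aA]$), and not merely the cyclic subgroup, is a worthwhile detail that the paper leaves implicit.
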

\begin{proof}
This follows by combining the Lemma \ref{grothlemma} with \eqref{hellerseq}
and \eqref{injeq}. Alternatively, one could argue this directly from the
definition of $\aA\stabgfmodule$, see \eqref{stabeq}.
\end{proof}

The sub-Hopf algebra $\aA$ of $\sap$ is dual to a quotient Hopf algebra
$\aA^\vee$ of $\dsap$. By Theorem \ref{subhopfthm}, $\aA^\vee$ is determined
by a sequence of integers, $n_1, n_2, n_3,\ldots,n_N$,
\begin{equation}\label{quotientalg}
\aA^{\vee}\cong \FF_p[\xi_1,\xi_2,\ldots,\xi_N]/\inpr{\xi_1^{n_1}, \xi_2^{n_2},\ldots,\xi_{N}^{n_N}}.
\end{equation}

As a vector space $\aA^{\vee}$ is isomorphic to a tensor
product of cyclic quotients of polynomial rings on one generator. Keeping
track of the grading yields the formula,
$$\dim_q \aA = \prod_{k=1}^{N} \frac{1-q^{n_k |\xi_k| }}{1-q^{|\xi_k|}}.$$

We have proven the following theorem.

\begin{theorem}\label{grothfinitethm}
Let $\aA$ be a finite dimensional sub-Hopf algebra of the Steenrod algebra $\sap$. Then the Grothendieck group of the category $\aA\stabgfmodule$ of positively graded stable modules is given by,

$$K_0(\aA\stabgfmodule)\cong \frac{\ZZ[q]}{ \inpr{\prod_{k=1}^{N} \frac{1-q^{n_k |\xi_k| }}{1-q^{|\xi_k|}}}},$$
for some sequence of integers $\{n_k\}_{k=1}^N$ satisfying the criteria of Theorem \ref{subhopfthm}.
\end{theorem}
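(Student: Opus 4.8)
The plan is to combine the preceding Corollary, which identifies $\K_0(\aA\stabgfmodule)$ with the quotient $\ZZ[q]/\inpr{[\aA]}$ where $[\aA] = \dim_q\aA$, with an explicit computation of the graded dimension $\dim_q\aA$. Since that Corollary already reduces the problem to understanding the single class $[\aA]$ in $\K_0(\aA\gfmodule)\cong\ZZ[q]$, the entire content of the theorem lies in producing the product formula for $\dim_q\aA$ and substituting it in.

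First I would pass from $\aA$ to its graded dual. Because graded duality is an isomorphism in each degree, the Poincar\'e series of $\aA$ and of the dual quotient Hopf algebra $\aA^\vee$ of $\dsap$ coincide, so $\dim_q\aA = \dim_q\aA^\vee$. By Theorem \ref{subhopfthm} the classification of quotient Hopf algebras of $\dsap$ supplies a sequence of integers $n_1,\ldots,n_N$ satisfying the stated inequalities with
$$\aA^{\vee}\cong \FF_p[\xi_1,\ldots,\xi_N]/\inpr{\xi_1^{n_1},\ldots,\xi_N^{n_N}}.$$

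Next I would compute the graded dimension of the right-hand side directly. As a graded vector space $\aA^\vee$ is the tensor product over $k$ of the truncated polynomial algebras $\FF_p[\xi_k]/(\xi_k^{n_k})$, each with basis $1,\xi_k,\ldots,\xi_k^{n_k-1}$. Using the convention $|\xi_k| = 2(p^k-1)$, the $k$th factor has Poincar\'e series $\sum_{j=0}^{n_k-1} q^{j|\xi_k|} = (1-q^{n_k|\xi_k|})/(1-q^{|\xi_k|})$, a finite geometric sum, and the graded dimension of a tensor product is the product of the graded dimensions. This reproduces the formula
$$\dim_q\aA = \prod_{k=1}^N \frac{1-q^{n_k|\xi_k|}}{1-q^{|\xi_k|}}$$
recorded just before the statement, and substituting $[\aA] = \dim_q\aA$ into the Corollary completes the argument.

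Every step is routine once the Corollary is available, so I do not anticipate a serious obstacle; the only points requiring care are verifying that $[\aA]$ genuinely equals the graded dimension and keeping the bookkeeping consistent. The first follows from Lemma \ref{grothlemma}, under which the class of a positively graded module in $\K_0(\aA\gfmodule)\cong\ZZ[q]$ is precisely its $q$-dimension, with $q$ recording the internal grading; the second amounts to tracking $|\xi_k| = 2(p^k-1)$ uniformly through the geometric-series computation so that the exponents match those in the claimed product.
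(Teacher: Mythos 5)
Your proposal is correct and follows essentially the same route as the paper: reduce to the Corollary $\K_0(\aA\stabgfmodule)\cong \ZZ[q]/\inpr{[\aA]}$, dualize to the quotient Hopf algebra $\aA^\vee$ of $\dsap$ classified by Theorem \ref{subhopfthm}, and compute $\dim_q\aA$ as a product of geometric series coming from the tensor factors $\FF_p[\xi_k]/(\xi_k^{n_k})$. Your explicit check that the class $[\aA]$ in $\K_0(\aA\gfmodule)\cong\ZZ[q]$ is the graded dimension (via Lemma \ref{grothlemma}) is a point the paper leaves implicit, but it is the same argument.
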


The integers $n_k$ are always of the form $p^{r_k}$ and $|\xi_k| = 2(p^k
-1)$ is the degree of the generator $\xi_k$ in the dual Steenrod algebra.

The quotient above can be written in terms of cyclotomic polynomials,
\begin{equation}\label{cyclotomiceqn} 
\frac{1-q^{n_k |\xi_k| }}{1-q^{|\xi_k|}} = \prod_{\substack{d \mid n_k |\xi_k|\\d \nmid |\xi_k|}} \Phi_{d}(q)
\end{equation}

where $\Phi_d(q) \in\ZZ[q]$ is the $d$th cyclotomic polynomial.

\begin{remark}\label{gradingremark}
It is possible to regrade the Steenrod algebra $\sap$. For instance, setting
$|P^k| = 2k$ changes the grading of the dual Steenrod algebra $\dsap$ to one
determined by the rule $|\xi_k| = 2(p^k - 1)/(p-1)$. In this way, different
versions of Theorem \ref{grothfinitethm} can be produced.
\end{remark}

\begin{remark}
Suppose that $T\subset \Phi = \{\Phi_1(q),\Phi_2(q),\Phi_3(q),\ldots\}$.
The Habiro ring \cite{Habiro} associated with such a subset is the inverse limit,
$$\ZZ[q]^T = \varprojlim_{f\in\Pi(T)} \ZZ[q]/\inpr{f}, $$

where $\Pi(T)$ is the set multiplicatively generated by $T$.  It would be
interesting to develop a relationship between Habiro rings and the Steenrod
algebra by combining the categories $\aA\stabmodule$ in some fashion.
\end{remark}

\section{Relation to Small Quantum Groups}\label{khqisec}

The standard Steenrod structure from Section \ref{modulestrsec} does not
always restrict to the $p$-differential graded structure used by Khovanov
and Qi. In this section a different Steenrod structure is defined on the
nilHecke algebras. This module structure is shown to restrict to a
$p$-differential graded structure which agrees, up to sign, with the one
used to define the small quantum groups in \cite{KhQi}.

\subsection{Hopf Algebra Structures for the Small Quantum Group}\label{khqisubsec}

In this section we review some definitions from Khovanov and Qi \cite{KhQi}.

For each prime $p$, there is a Hopf algebra $H$,
\begin{equation}\label{khqihopfeqn}
H = \FF_p[\partial]/(\partial^p).
\end{equation}

The grading on $H$ is determined by $|\partial| = 2$, the coproduct is given
by $\Delta(\partial) = \partial\otimes 1 + 1\otimes \partial$ and the
antipode is $\antip(\partial) = -\partial$.

The polynomial algebra,
$$\aP_n = \FF_p[x_1,\ldots,x_n],$$

with grading $|x_i| = 2$ is given an $H$ module structure by choosing the $p$-differential $\partial$ which is determined by the rules
\begin{equation}\label{khqieqn}
\partial(x_i) = x_i^2\conj{and} \partial(xy) = \partial(x)y + x\partial(y).
\end{equation}

The algebra of symmetric polynomials, $\Sym_n$, inherits an $H$ module
structure which is determined by the equations
\begin{equation}\label{khqisym}
\partial(e_i) = e_1 e_i - (i+1) e_{i+1}
\end{equation}
when $i < n$ and $\partial(e_n) = e_1 e_n$ otherwise, where $e_i$ is the
$i$th elementary symmetric polynomial.

Together Definition \ref{endnhdef} and Proposition \ref{algstuffprop} imply
that $\NH_n$ becomes an algebra in the category $H\module$. The module
structure on $\NH_n$ is determined by its values on the generators.
$$\partial(x_i) = x_i^2\conj{ and } \partial(\d_i) = (x_i + x_{i+1})\d_i$$

\subsubsection{Twisted Structure}\label{khqitwistedsec}
Instead of considering the ring of polynomials $\aP_n$ as a module over $H$,
the ideal
\begin{equation}\label{khqitwistedmodeqn}
\aP_n(a) = \inp{x_2^{a} x_3^{2a}\cdots x_n^{(n-1)a}}\subset \aP_n \conj{ where } a\in \ZZ_+
\end{equation}

is used instead. The $H$ module structure on the polynomial ring $\aP_n$
induces an $H$ module structure on the module $\aP_n(a)$. The isomorphism
$\aP_n(a) \cong \aP_n$ implies that
$$\NH_n \cong \End_{\Sym_n}(\aP_n(a)).$$

The algebra $\NH_n$ does not change, but the $H$ module structure on $\NH_n$
does change. This deformed structure is determined by the equations,
$$\partial_a(x_i) = x_i^2\conj{and} \partial_a(\delta_i) = a + (a+1)x_i\delta_i + (a-1) x_{i+1}\delta_i.$$

\subsection{Interpretations of the Small Quantum Group}\label{geokhqisubsec}

In this section the construction of the standard Steenrod module structure
from Section \ref{modulestrsec} is modified to obtain a structure which
restricts, up to sign, to the $p$-differential graded structure found in
Section \ref{khqisubsec}.

The Hopf algebra $H$ in Section \ref{khqisubsec} is the same as the Hopf
algebra $\ssap{1} = \FF_p\inp{P^1}$ in Section \ref{subhopfsec}.

In Section \ref{modulestrsec}, the polynomial ring, $\aP_n$, inherits an $H$
module structure after it has been identified with the cohomology ring
$H^*(BT;\FF_p)$. This standard $H$ module structure is determined by the
equations,
\begin{equation}\label{psteenrodeqn}
P^1 y_i = y_i^{p} \conj{and} P^1(xy) = (P^1 x) y + x (P^1 y).
\end{equation}

When the prime $p = 2$ this agrees with Section \ref{khqisubsec}. For
instance, \eqref{khqisym} is a special case of the Wu formulas in Section
\ref{expcompsec}. For odd primes, the equations \eqref{psteenrodeqn} differ
from those of \eqref{khqieqn}.

In Section \ref{unstabmsec} we introduce a Steenrod structure which
restricts to one which agrees, up to sign, with \eqref{khqieqn} for all
primes. In Section \ref{ptorussec} this choice is interpreted
topologically. In Section \ref{twstrsec} the twisted structures in Section
\ref{khqitwistedsec} are addressed.

\subsubsection{A Stable Module Structure}\label{unstabmsec}

In this section we describe a Steenrod module structure on the nilHecke
algebras which restricts to the $p$-differential defined in Khovanov and
Qi. This algebraic construction is obtained from topological considerations
in Section \ref{twstrsec}.

\begin{definition}
There is a grading on the Steenrod algebra $\sap$ obtained by setting $|P^k|
= 2k$. The corresponding grading on the dual Steenrod algebra $\dsap$ is
determined by the rule $|\xi_k| = 2 (p^k -1)/(p-1)$.
\end{definition}

The Adem relations respect any grading of the form $|P^k| = Ck$ for
$C\in\ZZ$. The grading above is chosen to make the gradings compatible in
the theorem below.

\begin{theorem}\label{khqidiffthm}
Suppose that $\aP_n$ is the graded polynomial ring 
$$\aP_n = \FF_p[x_1,\ldots,x_n]$$

with $|x_i| = 2$. Then the equations
$$P^k x_i = \left\{\begin{array}{ll}   {p - 1 \choose k} x_i^{k+1} &  0\leq k < p\\ 0 & \normaltext{ otherwise } \end{array}\right.$$

determine a Steenrod module algebra structure on $\aP_n$.

Moreover, this structure induces a $\sap$ module algebra structure on the
nilHecke algebras $\NH_n\otimes \FF_p$. Up to sign, the $p$-DG structure
used by Khovanov and Qi agrees with a restriction of this induced structure.
\end{theorem}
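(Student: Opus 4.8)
The plan is to prove the three assertions in turn, deriving the whole structure from the standard Steenrod action on $H^*(\CC P^\infty;\FF_p)$ rather than checking the Adem relations directly. For the first assertion, that the displayed formulas define a Steenrod module algebra, I would realize the action as a restriction. On $H^*(\CC P^\infty;\FF_p)\cong\FF_p[y]$ with $|y|=2$ and the standard grading $|P^k|=2k(p-1)$, the Example \eqref{powerofgen} gives $P^k(y^{p-1})=\binom{p-1}{k}y^{(p-1)(k+1)}=\binom{p-1}{k}(y^{p-1})^{k+1}$, which vanishes for $k\geq p$. Hence the subalgebra $\FF_p[y^{p-1}]$ is closed under $\sap$, and since the $P^k$ respect products via the Cartan formula, the $n$-fold analogue $\FF_p[y_1^{p-1},\ldots,y_n^{p-1}]\subset H^*((\CC P^\infty)^n;\FF_p)$ is a sub-$\sap$-module-algebra. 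Every internal degree occurring here is a multiple of $2(p-1)$, so dividing degrees by $p-1$ is well defined; this regrading sends $|y_i^{p-1}|$ to $2$ and $|P^k|$ to $2k$, exactly the grading fixed just before the theorem, and it preserves the Adem relations since they respect any grading $|P^k|=Ck$ (Remark \ref{gradingremark}). Identifying $x_i=y_i^{p-1}$ reproduces the stated formulas, so $\aP_n$ carries a genuine Steenrod module algebra structure as the restriction-and-regrading of one.

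For the second assertion I would invoke Corollary \ref{criteriacor} with $H=\sap$, which is cocommutative. Its hypothesis is that $\Sym_n\hookrightarrow\aP_n$ be a map of $\sap$-module algebras, i.e. that each $P^k$ preserve $\Sym_n=\aP_n^{\Sigma_n}$. This follows from $\Sigma_n$-equivariance: the formula $P^k x_i=\binom{p-1}{k}x_i^{k+1}$ is visibly equivariant under permuting indices, each permutation acts by an algebra automorphism, and equivariance propagates through the Cartan formula to all of $\aP_n$; hence each $P^k$ commutes with $\Sigma_n$ and preserves the invariants. Corollary \ref{criteriacor} then furnishes the induced $\sap$-module algebra structure on $\NH_n\otimes\FF_p\cong\Endo_{\Sym_n}(\aP_n)$.

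For the comparison with Khovanov--Qi I would restrict along the sub-Hopf algebra $\ssap{1}=\FF_p\inp{P^1}$, which equals $\mar_1\cong\FF_p[\partial]/(\partial^p)$ by Section \ref{subhopfsec} and is the Hopf algebra $H$ of Section \ref{khqisubsec}; since $(P^1)^p=0$ this restriction is a bona fide $p$-DG structure. On $\aP_n$ the primitive element $P^1$ acts as a derivation with $P^1 x_i=\binom{p-1}{1}x_i^2=-x_i^2$, so $P^1=-\partial$ as derivations, where $\partial$ is the Khovanov--Qi differential of \eqref{khqieqn}. Using the induced-action formula $(\overline{P}^1 f)(y)=P^1 f(y)-f(P^1 y)$, that is $\overline{P}^1 f=[P^1,f]$, a direct computation on $\d_i$ via the twisted Leibniz rule \eqref{leibniz} (treating $P^1$ as a derivation on the fraction field, commuting with $\sigma_i$, with $P^1(x_i-x_{i+1})=-(x_i-x_{i+1})(x_i+x_{i+1})$) gives $[\partial,\d_i]=-(x_i+x_{i+1})\d_i$, and hence $\overline{P}^1\d_i=[-\partial,\d_i]=(x_i+x_{i+1})\d_i$. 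This matches the Khovanov--Qi value $\partial(\d_i)=(x_i+x_{i+1})\d_i$ exactly, while on polynomial generators $\overline{P}^1 x_i=-x_i^2=-\partial(x_i)$; the two $p$-DG structures therefore coincide on the $\d_i$ and differ by an overall sign on $\aP_n$, which is the asserted agreement up to sign.

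The main obstacle is the sign bookkeeping in this last step. The induced action on $\NH_n$ depends on the placement of the antipode, and one must verify carefully that with the convention fixed by $(\overline{P}^n f)(y)=P^n_{(2)}f(\antip(P^n_{(1)})(y))$ the commutator $\overline{P}^1\d_i$ comes out with the sign that matches Khovanov--Qi on the crossing generator even though it is opposite on the dot generator. Pinning down precisely what \emph{up to sign} means here, and confirming that the single residual sign on $\aP_n$ leaves the resulting slash/Margolis homology and the small-quantum-group specialization unchanged, is the delicate point; everything else is the multiplicative extension and the restriction argument above.
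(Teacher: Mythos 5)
Your proposal is correct, and its skeleton (existence via regrading a structure coming from topology, then Corollary \ref{criteriacor}, then comparison of $P^1$ with \eqref{khqieqn}) parallels the paper's proof, but two of your three verifications take a genuinely different route. For existence, the paper appeals to the $p$-circle construction of Section \ref{ptorussec}, where $H^*(BS^1_p;\FF_p)\cong\FF_p[y]$ with $|y|=2(p-1)$; you instead realize the structure inside $H^*((\CC P^\infty)^{\times n};\FF_p)$ as the sub-$\sap$-module algebra $\FF_p[y_1^{p-1},\ldots,y_n^{p-1}]$, using only Example \eqref{powerofgen} and closure under the $P^k$, followed by the division-by-$(p-1)$ regrading. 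This is more elementary, since it avoids introducing $BS^1_p$ at all. For the descent to $\NH_n\otimes\FF_p$, both you and the paper route through Corollary \ref{criteriacor}, but the hypothesis that $\Sym_n\hookrightarrow\aP_n$ is a map of $\sap$-module algebras is checked differently: the paper asserts that the power sums $p_k$ generate $\Sym_n$ and satisfy $P^d p_k = \binom{n(p-1)}{k}p_{d+k}$, whereas you show each $P^k$ commutes with the $\Sigma_n$-action (equality on generators propagates multiplicatively via the Cartan formula) and therefore preserves the invariant subring. Your argument is in fact the more robust one: over $\FF_p$ the power sums do \emph{not} generate $\Sym_n$ (Newton's identities degenerate at $e_p$), so the paper's stated justification has a gap that your equivariance argument sidesteps. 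Finally, for the Khovanov--Qi comparison the paper only records $P^1 x_i=-x_i^2$ against $\partial(x_i)=x_i^2$; you additionally compute the induced action on the divided difference operators, $\overline{P}^1\d_i=[P^1,\d_i]=(x_i+x_{i+1})\d_i$, matching the formula of Section \ref{khqisubsec} on the nose, so that ``up to sign'' is pinned down precisely: exact agreement on the $\d_i$, a sign flip on the $x_i$. That is sharper than what the paper proves, and your flagged worry about antipode conventions is legitimate (with the commutator convention the Khovanov--Qi derivation itself satisfies $[\partial,\d_i]=-(x_i+x_{i+1})\d_i$, so these signs genuinely depend on conventions), but it does not affect the theorem's ``up to sign'' conclusion.
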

\begin{proof}
This is a Steenrod structure because it a regrading of the structure defined
topologically in Section \ref{ptorussec}.  

Alternatively, Corollary \ref{criteriacor} implies that this choice induces
a structure on the nilHecke algebras when the map $i : \Sym_n
\hookrightarrow \aP_n$ is a homomorphism of $\sap$ module algebras. The
algebra $\Sym_n$ is a $\sap$ module algebra because the symmetric powers $
p_k = x_1^k + \cdots + x_n^k$ generate $\Sym_n$ and satisfy the equation,
$$P^d p_k = {n(p-1) \choose k } p_{d+k}.$$

Since $i\circ P^d = P^d\circ i$, $\aP_n$ is a left $\Sym_n$ module object in
the category of modules over the Steenrod algebra.  We conclude by observing
that, up to sign, the operation $P^1$ acts agrees with equation
\eqref{khqieqn}.
\end{proof}

In Definition \ref{steenrodaxioms} axioms for the action of the Steenrod
algebra $\sap$ on modules of the form $H^*(X;\FF_p)$ were given. It is not
the case that every $\sap$ module $M$ satisfies all of these axioms. Modules
which comes from cohomology rings are always unstable in the sense of the
following definition.

\begin{definition}
A module $M$ over the Steenrod algebra $\sap$ is \emph{unstable} when property
\begin{enumerate}
\setcounter{enumi}{2}
\item If $2n > |x|$ then $P^n x = 0$
\end{enumerate}

from Definition \ref{steenrodaxioms} holds. 
\end{definition}

The category of unstable modules over the Steenrod structure has been
studied extensively \cite{Schwartz}. The standard Steenrod algebra structure
is unstable, but there is no reason to demand that the choice
\eqref{khqieqn} extends to an unstable module structure. The structure
defined in Theorem \ref{khqidiffthm} is not unstable.

\subsubsection{$p$-Tori}\label{ptorussec}

Suppose that no change is made to the grading of the Steenrod algebra
$\sap$. If the equation $P^1 y = \pm y^2$ is to hold then the degree $|P^1|
= 2(p-1)$ implies $|y| = 2(p-1)$. There is a choice of space $X$ which has
cohomology ring $\FF_p[y]$ with $|y| = 2(p-1)$. Instead of using the spaces
found in Section \ref{geosec} we could attempt to use this space instead.

Consider the system of inclusions,
\begin{equation}\label{absyseqn}
\ZZ/(p) \to \ZZ/(p^2) \to \ZZ/(p^3) \to \cdots
\end{equation}

between cyclic groups. Taking the limit of this system gives a group,
$$SS^1_p = \ZZ/(p^{\infty}) = \varinjlim_{l} \ZZ/(p^l),$$

which we call the \emph{super $p$-circle}. The $\ZZ/(p)$ action on each
factor in equation \eqref{absyseqn} yields a $\ZZ/(p)$ action on
$SS^1_p$. Define the \emph{$p$-circle} by $S^1_p = SS^1_p/(\ZZ/(p))$. These
definitions are motivated by the following proposition.

\begin{proposition}
The cohomology of the classifying space of the super $p$-circle and its quotient by $\ZZ/(p)$ are given by,
$$H^*(BSS^1_p;\FF_p) \cong \Lambda(x) \otimes \FF_p[y] \conj{and} H^*(BS^1_p;\FF_p) \cong \FF_p[y],$$

where $|x| = 2p - 3$ and $|y| = 2(p-1)$.
\end{proposition}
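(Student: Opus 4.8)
The plan is to compute both cohomologies from the complex reduced regular representation $V$ of $\ZZ/(p)$, for which $\dim_{\CC}V = p-1$, hence $\dim_{\RR}V = 2(p-1)$ and $S(V) = S^{2p-3}$. This representation is what forces the two degrees: the polynomial generator $y$ will be the mod $p$ Euler class $e(V)$, which lives in $H^{2(p-1)}$, while the exterior generator $x$ will be the fundamental class of the fibre sphere $S(V)$, in degree $\dim_{\RR}V - 1 = 2p-3$. Throughout I would work with the finite approximations $S(V^{\oplus l})$ and pass to the colimit only at the end.

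First I would compute $H^*(BS^1_p;\FF_p)$. I would present $BS^1_p$ as the colimit $\varinjlim_l X_l$ of the finite approximations built from $V$, whose cohomology is the truncated polynomial algebra $\FF_p[y]/(y^{l+1})$ with $|y| = 2(p-1)$ --- the analogue of $\CC P^l$ in which the Chern class $c_1$ of the tautological line is replaced by the top Chern (Euler) class $e(V) = c_{p-1}(V)$. That this class has degree exactly $2(p-1)$, and is nonzero mod $p$, can be seen by expanding $e(V)$ as the product of the Chern roots $j\cdot v$ over the nontrivial characters $1\le j\le p-1$, giving $e(V) = \big(\prod_{j=1}^{p-1}j\big)v^{p-1} = (p-1)!\,v^{p-1} \equiv -\,v^{p-1}\pmod p$ by Wilson's theorem. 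The restriction maps $X_{l+1}\to X_l$ are the evident truncations, hence surjective on cohomology, so the tower is Mittag--Leffler, $\varprojlim^1$ vanishes, and $H^*(BS^1_p;\FF_p)\cong\varprojlim_l\FF_p[y]/(y^{l+1})\cong\FF_p[y]$ with $|y| = 2(p-1)$.

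With the base in hand I would analyse the sphere bundle $S(V) = S^{2p-3}\hookrightarrow BSS^1_p\to BS^1_p$ supplied by the construction, via its Serre (equivalently Gysin) spectral sequence. The $E_2$-page is $\FF_p[y]\otimes\Lambda(x)$ with $|x| = 2p-3$ and $|y| = 2(p-1)$, and the only room for a differential is the transgression $d_{2p-2}(x) = \lambda\,y$, $\lambda\in\FF_p$. The crux is to show $\lambda = 0$: the relevant bundle is the one attached to the $p$-fold quotient $SS^1_p\to S^1_p$, so its mod $p$ Euler class is a $p$-multiple and therefore vanishes. Once $\lambda = 0$ the spectral sequence collapses at $E_2$ and $H^*(BSS^1_p;\FF_p)\cong\Lambda(x)\otimes\FF_p[y]$, with $x$ the spherical class (so $x^2 = 0$ for $p$ odd, by degree) and $y$ pulled back from $BS^1_p$.

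The step I expect to be the main obstacle is precisely this vanishing of the transgression, equivalently the assertion that $BSS^1_p$ is \emph{not} mod $p$ acyclic: if instead $\lambda\neq 0$ the Gysin sequence would force $H^*(BSS^1_p;\FF_p) = \FF_p$, so everything hinges on the Euler class of the $S(V)$-bundle being divisible by $p$. As a consistency check I would verify the case $p = 2$, where $V$ is the complex sign representation, $S(V) = S^1$, the bundle $BSS^1_2\to BS^1_2$ is the circle bundle of $\mathcal{O}(2)$ whose Euler class $2y$ indeed vanishes mod $2$, and the formula specialises to the classical $H^*(B\ZZ/(2);\FF_2)\cong\FF_2[x] = \Lambda(x)\otimes\FF_2[x^2]$ with $y = x^2$.
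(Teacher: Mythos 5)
The paper never actually proves this proposition---it is asserted in Section \ref{ptorussec} and immediately used---so there is no argument of the authors to compare yours against; your proposal has to be judged on its own, and judged that way it has genuine gaps, concentrated exactly where you locate the ``main obstacle.'' You compute throughout with spaces that are never constructed. The finite approximations $X_l$ with $H^*(X_l;\FF_p)\cong\FF_p[y]/(y^{l+1})$, $|y|=2(p-1)$, do not come ``built from $V$'': the natural finite spaces one can build from $V$, namely the free quotients $S(V^{\oplus(l+1)})/(\ZZ/(p))$, are lens spaces, whose mod $p$ cohomology is the truncation of $\Lambda(x)\otimes\FF_p[y]$ with $|x|=1$, $|y|=2$, i.e.\ it has a class in \emph{every} degree up to $2(p-1)(l+1)-1$ and is nothing like a truncated polynomial algebra on a degree $2(p-1)$ generator. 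Similarly, the ``$S(V)$-bundle supplied by the construction'' is not supplied by the paper's construction: the extension $1\to\ZZ/(p)\to SS^1_p\to S^1_p\to 1$ of discrete groups gives a fibration $B\ZZ/(p)\to BSS^1_p\to BS^1_p$ whose fibre is the infinite-dimensional space $B\ZZ/(p)$, not $S^{2p-3}$. Finally, the step you yourself single out as the crux---vanishing of the transgression---is asserted (``the Euler class is a $p$-multiple'') rather than proved. The one computation you do carry out, $e(V)=(p-1)!\,v^{p-1}\equiv -v^{p-1}$ by Wilson's theorem, is correct and does identify the intended polynomial generator, but it does not produce a space whose cohomology is generated by that class.

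These gaps cannot be filled within your setup, because with the paper's literal definitions the statement being proved is false. The quotient of the Pr\"ufer group by its unique subgroup of order $p$ is again the Pr\"ufer group (multiplication by $p$ realizes $SS^1_p/(\ZZ/(p))\cong\ZZ/(p^\infty)$), so $SS^1_p$ and $S^1_p$ are isomorphic discrete groups and their classifying spaces must have isomorphic cohomology; explicitly, the sequence $0\to\ZZ\to\ZZ[1/p]\to\ZZ/(p^\infty)\to 0$ together with the mod $p$ acyclicity of $B\ZZ[1/p]$ shows that $B\ZZ/(p^\infty)$ is mod $p$ equivalent to $\CC P^\infty$, so both rings are $\FF_p[y]$ with $|y|=2$. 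The idea that is missing---from your proof and from the paper's definitions alike---is the twist by $\FF_p^\times=\Aut(\ZZ/(p))$: the proposition becomes true when $SS^1_p$ and $S^1_p$ are replaced by $\ZZ/(p)\rtimes\FF_p^\times$ and $\ZZ/(p^\infty)\rtimes\FF_p^\times$ (equivalently, when one passes to homotopy orbits of the $\FF_p^\times$-action everywhere). Since $|\FF_p^\times|=p-1$ is prime to $p$, the transfer identifies the mod $p$ cohomology of these classifying spaces with the $\FF_p^\times$-invariants of $\Lambda(x)\otimes\FF_p[y]$ and of $\FF_p[y]$ respectively; a monomial $x^ay^b$ is invariant precisely when $a+b\equiv 0\pmod{p-1}$, and the bottom invariants $xy^{p-2}$ and $y^{p-1}$ are exactly the generators of degrees $2p-3$ and $2(p-1)$ appearing in the statement. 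Once this twist is in place your Gysin picture is morally correct (and your lens spaces, after dividing by $\FF_p^\times$, do approximate $BSS^1_p$), but the transfer/invariants argument then proves the proposition directly and makes the spectral sequence, with its unresolved transgression, unnecessary.
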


If the polynomial algebra $\aP_n$ is defined to be
\begin{equation}\label{polynewlabel}
\aP_n = H^*((BS^1_p)^{\times n};\FF_p) = \FF_p[y_1,\ldots, y_n].
\end{equation}

Then restricting to the sub-Hopf algebra $\ssap{1}=\FF_p\inp{P^1}$ yields the
following equations,
\begin{equation}\label{thmeqn}
P^1 y_i = -y_i^{2} \conj{and} P^1(xy) = (P^1 x) y + x (P^1 y).
\end{equation}

This agrees with \eqref{khqieqn} up to sign. The Steenrod module structure
on $\aP_n$ is a regrading of the one described by Theorem \ref{khqidiffthm}.

\begin{remark}
One might replace $BS^1_p$ by $BSS^1_p$. Since elements can now have
odd degree, a study of this variation would require the full Steenrod
algebra.
\end{remark}

\subsubsection{Twisted Structures}\label{twstrsec}

In order to obtain the twisted formulas described in Section
\ref{khqitwistedsec} in the setting of Section \ref{unstabmsec} above, we
use the Thom space of the line bundle associated to the value of the
twisting parameter. An $n$-tuple $v = (t_1, \ldots, t_n)\in\ZZ^n$ determines
a representation $\chi : T^n\to \CC^{\times}$ by the assignment
$$(z_1, \ldots, z_n) \mapsto z_1^{t_1}\cdots z_n^{t_n}.$$ 

After extending $\chi$ to $\CC$, the Borel construction $X_\chi = ET^n
\times_{T^n} \CC $ gives a line bundle over $BT^n$. The Thom isomorphism
implies that the cohomology of the Thom space $H^*(X_\chi^{\CC})$ of this
line bundle is isomorphic to the ideal $\inp{z_1^{t_1}\cdots
  z_n^{t_n}}\subset H^*(BT^n)$.

For a fixed value of $a$, the formulas in Section \ref{khqitwistedsec} for
$P^1$ follow when the vector
$$(t_1,\ldots,t_n) = (0, a, 2a, \ldots, (n-1)a)$$

is used to determine the line bundle. The structure defined is quite
complicated. It would be interesting to explore this perspective further.

\section{Proofs}\label{proofsec}

In this section we prove some of the results which appear in earlier
sections. The arguments here are meant to be read in conjunction with prior
statements and discussion.

\subsection{Proof of Proposition \ref{algstuffprop}}\label{algstuffproof}

\newcommand{\id}{\mathrm{Id}}
\begin{proof}
Recall the Sweedler notation, 
$$\Delta^{(n)}(x)=\left(\Delta \otimes \id \right)\circ \Delta^{(n-1)}(x) = x_{(1)}\otimes \dots \otimes x_{(n+1)}\, . $$

in which the summation sign preceding the right hand side is dropped. 

In the statement of the proposition, $H$ is a Hopf algebra, $A$ is an
algebra in $H\module$ and $M,N$ are left $A$ modules in the category
$H\module$. Suppose that $h\in H$, $a\in A$, $m\in M$ and $f \in
\Hom_A(M,N)$. Then Proposition \ref{algstuffprop} is equivalent to the
identity,
$$(h\cdot f)(a\cdot m) = a (h\cdot f)(m).$$

The left hand side of this equation is equal to the first term below.
\begin{align}
h_{(2)} f(\antip(h_{(1)}) (a\cdot m) &=  h_{(2)} f((\antip(h_{(1)})_{(1)} a) \cdot (\antip(h_{(1)})_{(2)} m))\nonumber \\
&= h_{(2)} \cdot ( \antip(h_{(1)})_{(1)} a \cdot f(\antip(h_{(1)})_{(2)} m)\nonumber\\
\label{sweedler}
&= ({h_{(2)}}_{(1)} \antip(h_{(1)})_{(1)} a) \cdot ({h_{(2)}}_{(2)} f(\antip(h_{(1)})_{(2)} m))
\end{align}

Since $\antip(h_{(1)}) = \antip(h)_{(2)}$ and $\antip(h_{(2)}) =
\antip(h)_{(1)}$, equation \eqref{sweedler} becomes the equality below.
$$({h_{(3)}} \antip({h_{(2)}}) a) \cdot ({h_{(4)}} f(\antip({h_{(1)}}) m)) = a (h\cdot f)(m)$$

The last equality follows from the identity, $(\id \otimes S )\Delta = \id$.

\end{proof}

\subsection{Proof of Proposition \ref{commutatorprop}}\label{commutatorproof}
This first argument establishes a commutation relation between the action of
Steenrod reduced $p$th powers and the action of divided difference operators
on the polynomial ring.

\begin{proof}
Without loss of generality, we may consider $\d = \d_1$ in $\NH_2$ acting on
$\aP_2$. 
We claim that,
$$\d P^d - \sum_{i+j=d}(-1)^i s^i \d P^{j}\in\NH_2 $$ where $s=s_1$ is a
polynomial defined in the statement of the proposition.  By Definition
\ref{endnhdef} it suffices to show that the expression above is
$\Sym_2$-linear. The proof is by induction using the Cartan formula in
Definition \ref{steenrodaxioms} and the $\Sym_2$-linearity of $\d$.

If $e\in\Sym_2$ and $x\in \aP_2$ then
\begin{eqnarray*}
\lefteqn{P^d\d(ex) - \sum_{i+j=d}(-1)^i s^i \d P^{j}(ex) }\\
&&= \sum_{n+m = d} P^n (e)  P^m(\d(x)) - \sum_{i+n+m=d}(-1)^i s^i P^n(e) \d P^m(x) \\
&&= e [P^d, \d](x) + \sum_{\substack{n+m = d\\n>0}} P^n (e)  [P^m,\d](x) - \sum_{\substack{i+n+m=d\\i>0}} (-1)^i s^i P^n(e) \d P^m(x)\\
&&= e [P^d, \d](x) + \sum_{\substack{i+n+m = d\\ i,n>0}} (-1)^i s^i P^n (e) 
\d P^\beta (x) - \sum_{\substack{i+n+m=d\\i>0}}(-1)^i s^i P^n(e) \d P^m(x)\\
&& = e P^d \d(x) - e \sum_{i+j=d} (-1)^i s^i \d P^j(x) \, .
\end{eqnarray*}

Here we used that $P^ne \in \Sym_2$, this follows from the Wu formula in
Section \ref{expcompsec}.

Our claim implies that the equation,
$$ P^d \d  -\sum_{i+j=d} (-1)^i s^i \d P^{j} =r+t\d \,$$

holds for some unique choice of polynomials $r,t\in\aP_2$. Applying both
sides of the above equation to the polynomial $1$ shows that $r = 0$ and
acting on $x_1$ shows that $t=0$.
\end{proof}

\subsection{Proof of Proposition \ref{sformulaprop}}\label{sformulaproof}

Recall the special polynomial $s_i = \d_i(P^1 x_i)$ from
\eqref{sdefeqn}. The next proof establishes a formula for the action of
Steenrod reduced $p$th powers on $s_i$ for each $i> 0$.

\begin{proof}
If we omit subscripts and set $s = \d (P^1 x) = \d (x^p)$ then we wish to
show that for each $d\geq 0$,
\begin{equation*}
P^d s = \left\{
\begin{array}{lr} (-1)^d s^{d+1} & d < p \\ 0 & d \geq p \end{array}\right.
\end{equation*}
We begin by combining all of the Steenrod operations into a new operator
$$\hat{P} = \sum_{k \geq 0} P^k.$$

The proof consists of an application of Proposition \ref{commutatorprop} and
changing the order of summation.
\begin{align*}
\hat{P}s &= \sum_{k\geq 0} P^k \d (x^p) = \sum_{k\geq 0} \sum_{i=0}^k (-1)^{k-i} s^{k-i} \d(P^i x^p)\\
&= \sum_{i\geq 0} \d (P^i x^p) \sum_{k\geq i} (-1)^{k-i} s^{k-i}.
\end{align*}

So that $\hat{P} s = \d (\hat{P} x^p)/(1+s)$. The Cartan relation in
Definition \ref{steenrodaxioms} implies that
$$\hat{P}(x^p) = (\hat{P} x)^p = (x + x^p)^p = x^p + x^{p^2},$$

and $\d(x^p + x^{p^2}) = s + s^{p+1}$ which allows us to conclude that,
$$\hat{P} s = s\frac{1+s^p}{1+s} = \sum_{k=0}^{p-1} (-1)^k s^{k+1}.$$

Examining the graded components of each expression establishes Proposition
\ref{sformulaprop}.
\end{proof}

\subsection{Proof of Proposition \ref{actiononnhprop}}\label{actiononnhproof}
In this section we justify the formulas for the action of the Steenrod
algebra $\sap$ on the nilHecke algebras $\NH_n$.  Recall that an element
$P\in\sap$ in the Steenrod algebra acts on $f\in \End(\aP_n)$ by $f \mapsto
\overline{P}^n f$ where
\begin{equation}\label{bar}
(\overline{P}^n f)(y) = P^n_{(2)}f(\antip(P^n_{(1)})(y))\, 
\end{equation}
for any polynomial $y\in\aP_n$.

\begin{proof}
Let us compute the action of $P^n\in\sap$ on $\d_i\in\NH_m$. We set
$\d=\d_i$ and $s = s_i$ where $s_i=\d_i(P^1 x_i)$. Equation \eqref{bar}
implies that
$$\overline{P}^n(\d) = P^n \d +\sum^{n-1}_{j=1} P^j \d \antip(P^{n-j}) -\d(\antip(P^n)) =  [P^n,\d] + \sum^{n-1}_{j=1} [P^j,\d] \antip(P^{n-j})\, .$$

Proposition \ref{commutatorprop} above yields the equation
\begin{align*}
\bar P^n(\d)&= \sum^{n}_{j=1} (-1)^j s^j \d P^{n-j} 
+ \sum^{n-1}_{i=1} \sum^{i}_{j=1} (-1)^j s^j \d P^{i-j} \antip(P^{n-i}).
\end{align*}
Separating the terms with $i=j$ in the second summation we obtain,
\begin{align*}
\overline{P}^n(\d) &= \sum^{n}_{j=1} (-1)^j s^j \d P^{n-j} 
+ \sum^{n-1}_{j=1} (-1)^j s^j \d \antip(P^{n-j})
+\sum^{n-1}_{i=2} \sum^{i-1}_{j=1} (-1)^j s^j \d P^{i-j} \antip(P^{n-i})\\&
=(-1)^n s^n \d - 
\sum^{n-1}_{j=1} \sum^{n-j}_{i=1} (-1)^j s^j \d P^{i} \antip(P^{n-i-j})
+\sum^{n-1}_{i=2} \sum^{i-1}_{j=1} (-1)^j s^j \d P^{i-j} \antip(P^{n-i})\, .
\end{align*}
After a change of variables the last two sums cancel implying the desired
result. The action of $P^n\in\sap$ on $x_i\in\NH_m$ is left as an exercise
to the reader.
\end{proof}

\subsection{Proof of Theorem \ref{pdiffthm}}\label{pdiffproof}

The proof of the theorem will hinge upon formula from Euler's oeuvre \cite{Euler},
\begin{equation}\label{eulereq}
\left( \frac{1-x^p}{1-x} \right)^n = (1 + x + \cdots + x^{p-1})^n = \sum_{k=0}^{np} {n \choose k}^p x^k,
\end{equation}

where the symbol ${n \choose k}^p$ is equal to the usual binomial coefficient when $p=2$ and
$${n \choose \lambda}^p = \sum_{k=0}^{\lambda} {n \choose \lambda - k}^2 {\lambda - k \choose k}^{p-1}$$

when $p> 2$.

\begin{lemma}\label{generalizedpslemma}
$$P^k s^n = \left\{\begin{array}{ll} {n \choose k}^p (-1)^k s^{k+n} & k \leq np \\ 0 & k > np. \end{array}\right.$$
\end{lemma}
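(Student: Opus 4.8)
The plan is to collapse all of the individual power operations into the total Steenrod power and then reduce the computation to the single-power formula of Proposition \ref{sformulaprop} combined with Euler's identity \eqref{eulereq}.

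First I would introduce the total operation $\hat{P} = \sum_{k\geq 0} P^k$ acting on the polynomial ring $\aP_n$. The Cartan formula in Definition \ref{steenrodaxioms} gives $\hat{P}(fg) = \hat{P}(f)\hat{P}(g)$, so $\hat{P}$ is a ring homomorphism. In particular it is multiplicative on powers of the special polynomial $s = s_i$, so that $\hat{P}(s^n) = (\hat{P}s)^n$. This reduces the entire problem to raising the already-known value of $\hat{P}s$ to the $n$th power.

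Next I would substitute the value of $\hat{P}s$ established in the course of proving Proposition \ref{sformulaprop}, namely $\hat{P}s = \sum_{k=0}^{p-1}(-1)^k s^{k+1}$. Factoring $s^n$ out of the $n$th power leaves
$$\hat{P}(s^n) = s^n \left( \sum_{k=0}^{p-1} (-s)^k \right)^n = s^n \left( \frac{1-(-s)^p}{1-(-s)} \right)^n.$$
Applying Euler's formula \eqref{eulereq} with the variable specialized to $x = -s$ converts the parenthesized expression into $\sum_{k=0}^{np} {n \choose k}^p (-s)^k$, whence
$$\hat{P}(s^n) = \sum_{k=0}^{np} {n \choose k}^p (-1)^k s^{n+k}.$$

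Finally I would extract $P^k s^n$ by comparing graded components. Since $s$ is homogeneous of degree $2(p-1)$ and $|P^k| = 2k(p-1)$, the summand $P^k s^n$ sits in degree $2(n+k)(p-1)$, which is precisely the degree of the monomial $s^{n+k}$; because distinct powers of $s$ occupy distinct degrees, matching graded pieces in the displayed identity yields $P^k s^n = {n \choose k}^p (-1)^k s^{n+k}$ for $0\leq k\leq np$ and $P^k s^n = 0$ for $k>np$, as claimed. The only delicate point, and the step I would check most carefully, is the sign bookkeeping in the substitution $x=-s$: one must confirm that $\left(\frac{1-(-s)^p}{1-(-s)}\right)^n$ is genuinely the instance of \eqref{eulereq} being invoked for all $p$ (the case $p=2$ collapsing the signs), so that the coefficient ${n\choose k}^p$ emerges with the correct factor $(-1)^k$.
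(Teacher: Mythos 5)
Your proposal is correct and follows essentially the same route as the paper: introduce the total power $\hat{P} = \sum_{k\geq 0}P^k$, use the Cartan formula to get $\hat{P}(s^n) = (\hat{P}s)^n$, substitute the value $\hat{P}s = s\frac{1-(-s)^p}{1-(-s)}$ from Proposition \ref{sformulaprop}, apply Euler's identity \eqref{eulereq} at $x=-s$, and compare graded components. Your write-up is in fact slightly more careful than the paper's, which leaves the sign bookkeeping and the degree-matching step implicit.
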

\begin{proof}
If $\hat{P} = \sum_{k\geq 0} P^k$  then Lemma \ref{sformulaprop} implies that
$$\hat{P} s = \sum_k P^k s = \sum_{k = 0}^{p-1} (-1)^k s^{k+1} = s \frac{1-(-s)^p}{1-(-s)}.$$

The Cartan formula implies that
$$\hat{P} s^n = (\hat{P} s)^n = s^n \left( \frac{1-(-s)^p}{1-(-s)} \right)^n,$$

and using Euler's formula above yields the statement of the lemma.
\end{proof}

We now use Lemma \ref{generalizedpslemma} to compute what the primitive
Margolis differentials $d_k$ do to the divided difference operators
$\delta_i\in\NH_n$.

\begin{proof}
Let $\delta = \delta_i$ and $s = s_i$. The differentials are defined
recursively by the formula,
\begin{equation}\label{marggg}
d_1 = P^1 \conj{ and } d_{n+1} = [d_n, P^{p^n}].
\end{equation}

Proposition \ref{actiononnhprop} implies that
$$d_1 \delta = -s \delta.$$

It follows from the recursion \eqref{marggg} and induction that,
$$d_{n+1}\delta = C_n (-1)^{l_n} s^{l_n} \delta,$$

where $l_n = (p^{n+1}-1)/(p-1) = p^n + p^{n-1} + \cdots + p + 1$ and
\begin{equation}\label{cneq}
C_n = C_{n-1} - C_{n-1} \sum_{i=0}^{p^n} {l_{n-1} \choose i}^p.
\end{equation}

The sum above is equal to zero because setting $x = 1$ in \eqref{eulereq} implies that
$$\sum_{i=0}^{n(p-1)} {n \choose i}^p \equiv 0 \imod{p}.$$

If $n = l_{n-1}$  then the sum becomes
$$\sum_{i=0}^{p^n - 1} {l_{n-1} \choose i}^p \equiv 0 \imod{p}$$

Since ${l_{n-1} \choose p^n}^p$ is zero the sum in \eqref{cneq} is zero and $C_n = 1$ for all $n> 0$.
\end{proof}

{\bf Acknowledgements.}  The authors would like thank Nicholas Kuhn for some helpful conversations.

B. Cooper was supported in part by the University of Virginia and the Max
Planck Institute for Mathematics.

\bibliographystyle{alpha}  
\bibliography{steenrod}  

\end{document}